\title{Tightness of a new and enhanced semidefinite relaxation for MIMO detection
\thanks{\textbf{Funding:} C. Lu's research was supported in part by NSFC grants 11701177 and 11771243 and Fundamental Research Funds for the Central Universities grant 2018ZD14. Y.-F. Liu's research was supported in part by NSFC grants 11671419, 11688101, and 11631013. W.-Q. Zhang's research was supported in part by NSFC grant U1836219. S. Zhang's research was supported in part by NSF grant CMMI-1462408.}
}
\author{Cheng Lu\footnotemark[2] \and Ya-Feng Liu\footnotemark[3] \and Wei-Qiang Zhang\footnotemark[4] \and Shuzhong Zhang\footnotemark[5]}
\newcommand{\T}{\mathsf{T}}
\begin{document}
\maketitle
\slugger{siopt}{xxxx}{xx}{x}{x--x}

\renewcommand{\thefootnote}{\fnsymbol{footnote}}
\footnotetext[2]{School of Economics and Management, North China Electric Power University, Beijing, China (lucheng1983@163.com).}
\footnotetext[3]{State Key Laboratory of Scientific and Engineering Computing, Institute of Computational
                Mathematics and Scientific/Engineering Computing, Academy of
                Mathematics and Systems Science, Chinese Academy of Sciences, Beijing
                100190, China (yafliu@lsec.cc.ac.cn).}
\footnotetext[4]{Department of Electronic Engineering, Tsinghua University, Beijing 100084, China (wqzhang@tsinghua.edu.cn).}
\footnotetext[5]{Department of Industrial and Systems Engineering, University of Minnesota, Minneapolis, MN 55455, USA (zhangs@umn.edu).}

\newtheorem{remark}{Remark}

\begin{abstract}
In this paper, we consider a fundamental problem in modern digital communications known as multiple-input multiple-output (MIMO) detection, which can be formulated as a complex quadratic programming problem subject to unit-modulus and discrete argument constraints. Various semidefinite relaxation (SDR) based algorithms have been proposed to solve the
problem in the literature. In this paper, we first show that the conventional SDR is {generally} not tight for the problem.
Then, we propose a new and enhanced SDR and show its tightness under an easily checkable condition, which essentially requires the level of the noise to be below a certain threshold.
The above results have answered an open question posed by So in \cite{So2010}. Numerical simulation results show that
our proposed SDR significantly outperforms the conventional SDR in terms of the relaxation gap.
\end{abstract}

\begin{keywords} complex quadratic programming, semidefinite relaxation, MIMO detection, tight relaxation \end{keywords}

\begin{AMS}90C22, 90C20, 90C46, 90C27 \end{AMS}


\pagestyle{myheadings}
\thispagestyle{plain}
\markboth{C. LU, Y.-F. LIU, W.-Q. ZHANG AND S. ZHANG}{TIGHTNESS OF A NEW AND ENHANCED SDR FOR MIMO DETECTION}

\section{Introduction}
Multiple-input multiple-output (MIMO) detection is a fundamental problem in modern digital communications \cite{Yang}.
Mathematically, the input-output relationship of the MIMO channel can be modeled as \begin{equation}\label{rHv}r=Hx^{\ast}+v,\end{equation} where
\begin{itemize}
  \item [-] $r \in\mathbb{C}^{m}$ is
the vector of received signals,
\item [-] $H\in\mathbb{C}^{m\times n}$ is an
$m\times n$ complex channel matrix (for $n$ inputs and $m$ outputs with $m\geq n$),
\item [-] $x^{\ast}\in\mathbb{C}^{n}$ is the vector of transmitted
symbols, and
\item [-] $v\in\mathbb{C}^{m}$ is an additive {white circularly symmetric Gaussian noise}.
\end{itemize}
Assume that $M$-Phase-Shift Keying {($M$-PSK)} modulation scheme with $M\geq 2$ is adopted. Then each entry $x_i^{\ast}$ of $x^{\ast}$ belongs to a finite set of symbols, i.e.,
$$x_i^{\ast}\in \left\{e^{\textbf{i}\theta}\mid\theta=2j\pi/M,~j=0,1,\ldots,M-1\right\},~i=1,2,\ldots,n,$$ where $\textbf{i}$ is the imaginary unit.
The MIMO detection problem is to recover the vector of transmitted symbols $x^{\ast}$ from the vector of received signals $r$ based on the knowledge of the channel matrix $H$.
The mathematical formulation of the problem is 
\begin{equation*}\begin{array}{cl}
\displaystyle \min_{x \in \mathbb{C}^{n}} ~&~ \left\|Hx-r\right\|_2^2 \\[3pt] \tag{P}
\mbox{s.t.} ~&~ |x_i|^2=1,~\arg{(x_i)}\in \mathcal{A},~i=1,2,\ldots,n,
\end{array}\end{equation*}
where $\|\cdot\|_2$ denotes the Euclidean norm, $\arg{(\cdot)}$ denotes the argument of the complex number, and
\begin{equation}\label{setA}\mathcal{A}=\left\{0, 2\pi/M,\ldots,2(M-1)\pi/M\right\}.\end{equation}
It has been shown that minimizing the square error in the above MIMO detection problem (P) is equivalent to minimizing the probability of
{{vector}} detection error (see, e.g., \cite[Chapter 3]{Verdu}).

There are two related problems to {{the MIMO detection problem (P)}}. The first one is the following complex
quadratic programming problem:
\begin{equation}\begin{array}{cl}
\displaystyle \min_{x \in \mathbb{C}^{n}} ~&~ x^{\dag}Qx+2\textrm{Re}(c^{\dag}x) \\[3pt] \tag{CQP}
\mbox{s.t.} ~&~ |x_i|^2=1,~\arg(x_i) \in \mathcal{A},~i=1,2,\ldots,n,
\end{array}\end{equation}
where $Q\in \mathbb{C}^{n\times n}$ is a Hermitian matrix, $c\in\mathbb{C}^n$ is a complex column vector,
$(\cdot)^{\dag}$ denotes the conjugate transpose, and
$\textrm{Re}(\cdot)$ denotes the element-wise real part of a complex matrix/vector/number. Problem (P) is a special case of problem (CQP) where
$Q=H^{\dagger}H$ and $c=-H^{\dag} r.$
The second one is the following unit-modulus constrained quadratic programming problem:
\begin{equation}\begin{array}{cl}
\displaystyle \min_{x \in \mathbb{C}^{n}} ~&~  x^{\dag}Qx+2\textrm{Re}(c^{\dag}x)\\[3pt]
\mbox{s.t.} ~&~ |x_i|^2=1,~i=1,2,\ldots,n, \tag{UQP}
\end{array}\end{equation}
which can be seen as a continuous relaxation of problem (CQP) in the sense
that the discrete argument constraints $ \arg{(x_i)}\in \mathcal{A}$ for $i=1,\ldots,n$ in problem (CQP) are dropped.

Both problems (CQP) and (UQP) have been extensively studied due to their
broad applications. For instance, the Max-Cut problem \cite{Goemans1995} and the Max-3-Cut problem \cite{Goemans2004} are special cases of problem (CQP) with homogeneous
objective functions and with $M\in\{2,3\}$; the classical binary quadratic programming problem \cite{Beck} is also a special case of problem (CQP) with $M=2$. Moreover, problems (CQP) and (UQP) have found wide applications in signal processing and wireless communications, including MIMO detection \cite{Jalden2006,Jalden}, angular synchronization \cite{Bandeira2016,Singer},
phase retrieval \cite{Waldspurger}, and radar signal processing \cite{Maio2009,Wenqiang17,Soltanalian}. However, it is known that both problems (CQP) and (UQP) are NP-hard \cite{Zhang}, and the MIMO detection problem, as
a special case of problem (CQP), is also NP-hard \cite{Verdu1989}. Therefore, there is no polynomial-time algorithms which can solve these
problems to global optimality in general, unless P=NP.

{{
Back to the MIMO detection problem (P), various algorithms have been proposed to solve it. The sphere decoder algorithm \cite{Damen}, which can be seen as
a special branch-and-bound algrithm \cite{Murugan}, has been developed to find the exact solution of problem (P). However, the worst-case complexity
of the sphere decoder algorithm is exponential. To overcome the exponential complexity issue, some suboptimal algorithms have been proposed, including the
zero-forcing detector \cite{Kohno,Schneider}, the minimum mean-squared error detector \cite{Honig,Xie}, and the decision feedback detector \cite{Varanasi}.
However, the performance of these low-complexity suboptimal algorithms (in terms of the vector detection error rate) generally is poor. 

In the last two decades, the semidefinite relaxation (SDR) based algorithms have 
been widely studied in the signal processing and wireless communication community \cite{Luo2010}.
%
}}
For various SDR based algorithms for problems (CQP) and (UQP) under different signal processing and wireless communication scenarios, we refer the interested reader to  \cite{Bandeira2016,Jalden2006,Jalden,jacobsson2017quantized,Kisialiou,simo,Lu2017iccc,Ma2004,Maio2009,Wenqiang17,So2010,Tan,Waldspurger,Zhang} and the references therein. The SDR based algorithms
generally perform very well in some signal processing and wireless communication applications, as pointed out in \cite{Bandeira2014}. Similar observations have also
been made for MIMO detection \cite{Jalden2006,Jalden2008,Ma2004}, asynchronous multi-sensor
 data fusion \cite{Wenqiang17}, as well as angular synchronization \cite{Singer}. For the above applications, the SDR based algorithms proved to be impressively effective if the so-called signal-to-noise ratio (SNR) is high. {{In particular, for problem (P) with $M=2$, it has been shown in \cite{Jalden2006,Jalden2008} that
the SDR based algorithms can achieve the maximum possible diversity order (under the assumption that all entries of $H$ are independent and identically distributed (i.i.d.)
Gaussian variables).}}

Therefore, it has been a longstanding important question in the field as to understand why the performances of the SDR based algorithms are so remarkably good in practice. One line of research is directed to analyze the approximation ratios of the SDR based algorithms. Along this direction, the approximation ratios of some SDR based algorithms have been
analyzed in \cite{So2008,Zhang} for problems (CQP) and (UQP) with homogeneous positive semidefinite objective functions.
Another line of research is to identify conditions under which the SDRs are tight \cite{Bandeira2016,Jalden2006,Jalden,Kisialiou2005,Kisialiou,So2010}. The phenomenon that some nonconvex problems are equivalent to
their convex relaxations under certain conditions can also be regarded as a type of hidden convexity \cite{Sun}.
It is also worth remarking that, in some signal processing and wireless communication applications with high SNRs, even first-order algorithms are guaranteed to
converge to the global solution of these nonconvex problems \cite{Boumal,Liu,Liu2017}.


In this paper, we focus on the MIMO detection problem (P).
For ease of presentation, we define the \emph{tightness} of an SDR of the MIMO detection problem (P) as follows.
\begin{definition}
  An SDR of problem (P) is called tight if the following two conditions hold:
  \begin{itemize}
    \item [-] the gap between the SDR and problem (P) is zero; and
    \item [-] the SDR recovers the true vector of transmitted signals.
  \end{itemize}
\end{definition}
For the MIMO detection problem (P), the tightness
~of some SDRs has been studied in \cite{Jalden2006,Jalden,Kisialiou2005,Kisialiou,So2010}. 
In particular, So proved in \cite{So2010} that, for the case where $M=2$, there exists a tight SDR (see (BSDP) further ahead) if the inputs $H$ and $v$ in \eqref{rHv} satisfy \begin{equation}\label{M2condition}\lambda_{\min}\left(\textrm{Re} (H^{\dag}H)\right)>\|\textrm{Re}(H^{\dag}v)\|_{\infty},\end{equation} where
$\lambda_{\min}(\cdot)$ denotes the smallest eigenvalue of a given matrix and $\|\cdot\|_{\infty}$ denotes
the $L_\infty$-norm. 
In \cite{So2010}, So also posed the following open question:
Is the condition 
\begin{equation}\label{directcondition}\lambda_{\min}(H^{\dag} H)>\| \ H^{\dag} v\|_{\infty}\end{equation}sufficient for the conventional (complex) SDR being tight for problem (P) with $M\geq 3$?

The main contributions of our paper are twofold. First, we show that the conventional SDR is generally not tight for problem (P) and thus
answers an open question posed by So.
Second, we propose an enhanced SDR for problem (P), which is much tighter than the conventional SDR. We prove that our proposed enhanced SDR is tight for the case where $M\geq 3$ if the following condition is satisfied:
\begin{equation}\label{condition}
  \lambda_{\min}\left(H^{\dag}H\right)\sin\left(\frac{\pi}{M}\right)>\left\|H^{\dag}v\right\|_{\infty}.
\end{equation} To the best of our knowledge, {{for the case where $M\geq 3$}}, our new enhanced SDR is the first one to have a theoretical guarantee of tightness  if the SNR of the problem is sufficiently high (or equivalently the noise level of the problem is sufficiently low). Numerical results show that the new enhanced SDR performs significantly better than the conventional SDR in terms of the relaxation gap as well as the ability to recover the vector of transmitted signals.

%
%
%
%

The rest of this paper is organized as follows. In Section 2, we review the conventional SDR for problem (P) and show that it is generally not tight.
Then, we propose an enhanced SDR for problem (CQP) in Section 3 and prove it to be tight for problem (P) --- a special case of problem (CQP) --- if condition \eqref{condition} holds in Section 4. In Section 5, we present some numerical results to show the effectiveness of the newly proposed SDR. Finally, we conclude the paper in Section 6.


We adopt the following somewhat standard notations in this paper. For a given complex vector $x$, we use $x_i$ (or
$[x]_i$) to denote its $i$-th entry, $\|x\|_2$ to denote its Euclidean norm, {$\|x\|_1$ to denote its $L_1$-norm,} {$\|x\|_{\infty}$ to denote its $L_\infty$-norm,} and $\text{Diag}(x)$ to denote the diagonal matrix with the diagonal entries being $x.$
For a given complex Hermitian matrix $A$, $A\succeq 0$ means $A$ is positive semidefinite, $\textrm{Trace}(A)$ denotes the trace of $A$,
$A_{i,j}$ denotes the $(i, j)$-th entry of $A,$ and $A^{\dag}$ and $A^{\T}$ denotes the conjugate transpose and transpose of $A$, respectively. For two Hermitian matrices $A$ and $B$, $A\succeq B$ means $A-B\succeq 0$ and $A\bullet B$ means $\textrm{Re}\left(\textrm{Trace}(A^{\dag}B)\right).$ 
For any given matrix $C\in \mathbb{C}^{m\times n}$ (including the scalar case and the vector case),
we use $\textrm{Re}(C)$ and $\textrm{Im}(C)$ to denote the component-wise real and imaginary parts of $C$, respectively.
For a set $\mathcal{S}$, we use $\textrm{Conv}(\mathcal{S})$ to denote its convex envelope.
Finally, we use $\textbf{i}$ to denote the imaginary unit which
satisfies the equation $\textbf{i}^2=-1,$
{{and use $I_n$ to denote the $n$ by $n$ identity matrix}}.
In the remainder of this paper, we will focus on the MIMO detection problem.
We 
denote $$Q=H^{\dag} H,~c=-H^{\dag} r,~\text{and}~\lambda_{\min}=\lambda_{\min}(Q)$$ unless otherwise specified.


\section{Conventional semidefinite relaxations}
By introducing $X=xx^{\dag}$ and relaxing it to $X \succeq xx^{\dag}$ and dropping the argument constraints $\arg(x_i) \in \mathcal{A}$ for all $i=1,\ldots,n,$ we get the following conventional SDR for (CQP): 
\begin{equation}\begin{array}{cl}
\displaystyle \min_{x,\,X} ~&~  Q\bullet X +2 \textrm{Re}(c^{\dag}x) \\[3pt] \tag{CSDP}
\mbox{s.t.} ~&~ X_{i,i}=1,~i=1,2,\ldots,n,\\[3pt]
            ~&~ X \succeq xx^{\dag},
\end{array}\end{equation} where the variables $x \in \mathbb{C}^{n}$ and $X \in \mathbb{C}^{n\times n}.$
(CSDP) is a complex semidefinite program, which
has been widely used in the literature. Indeed, the approximation
algorithms in \cite{Bandeira2016,Jalden2006,Jalden,Kisialiou,Ma2004,Maio2009,So2008,So2010,Waldspurger,Zhang}
are all based on (CSDP).

In \cite{So2010}, So studied the tightness of an SDR for the MIMO detection problem.
For the special case where $M=2$, the argument constraints of (CQP) become the binary constraints $x\in\{-1,1\}^n$. Therefore, (CSDP) can be posed as the following real SDR:
\begin{equation}\begin{array}{cl}
\displaystyle \min_{x,\,X} ~&~  \textrm{Re}(Q) \bullet X +2 \textrm{Re}(c)^{\T}x \\[3pt] \tag{BSDP}
\mbox{s.t.} ~&~ X_{i,i}=1,~i=1,2,\ldots,n,\\[3pt]
            ~&~ X\succeq xx^{\T},
\end{array}\end{equation} where the variables $x \in \mathbb{R}^{n}$ and $X \in \mathbb{R}^{n\times n}.$
Based on (BSDP), So proved the following theorem.

\begin{theorem}[\cite{So2010}]\label{thm:so}
Suppose that $M=2.$ If the inputs $H$ and $v$ in \eqref{rHv} satisfy \eqref{M2condition}, then (BSDP) is tight for (P).
\end{theorem}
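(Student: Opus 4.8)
The plan is to prove tightness by a dual-certificate (KKT) argument: I will exhibit explicit Lagrange multipliers certifying that the rank-one lifting of the true signal is the unique optimal solution of (BSDP). Since $M=2$ the admissible symbols are $\{+1,-1\}$, so the true vector $x^{\ast}$ is real and problem (P) reduces to minimizing $x^{\T}\textrm{Re}(Q)x+2\textrm{Re}(c)^{\T}x$ over $x\in\{-1,1\}^n$. Writing $\hat{x}=(1,\,(x^{\ast})^{\T})^{\T}$, the candidate primal-optimal point is $Z^{\ast}=\hat{x}\hat{x}^{\T}$, which is feasible for the homogenized form of (BSDP), since all its diagonal entries equal $1$ and $Z^{\ast}\succeq0$.

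First I would homogenize (BSDP) by a Schur complement, writing it as $\min\{\widetilde{Q}\bullet Z:\ Z_{i,i}=1,\ Z\succeq0\}$ with $\widetilde{Q}=\begin{pmatrix}0&\textrm{Re}(c)^{\T}\\ \textrm{Re}(c)&\textrm{Re}(Q)\end{pmatrix}$, and form its SDP dual $\max\{\sum_i\mu_i:\ S:=\widetilde{Q}-\textrm{Diag}(\mu)\succeq0\}$. The complementary-slackness requirement $S\hat{x}=0$ then pins down the multipliers: using the model $r=Hx^{\ast}+v$, i.e.\ $\textrm{Re}(c)=-\textrm{Re}(Q)x^{\ast}-\textrm{Re}(H^{\dag}v)$ (valid since $x^{\ast}$ is real), the lower block of $S\hat{x}=0$ gives $\mu_i=-x_i^{\ast}[\textrm{Re}(H^{\dag}v)]_i$ (recall $x_i^{\ast}=\pm1$, so $1/x_i^{\ast}=x_i^{\ast}$), and the top entry fixes $\mu_0$. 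By construction this $\mu$ satisfies $S\hat{x}=0$, so the duality gap $\textrm{Trace}(SZ^{\ast})=\hat{x}^{\T}S\hat{x}$ vanishes; hence once dual feasibility is verified, weak duality makes both points optimal.

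The crux is to show the dual slack $S\succeq0$. Here I would exploit that $\hat{x}\in\ker S$ and that its leading entry is $1$: solving $S\hat{x}=0$ block-wise expresses the $0$-th row and column of $S$ in terms of the lower-right $n\times n$ block $S_{11}$, and a direct congruence of Schur type yields, for any $z=(z_0,w)$, the identity $z^{\T}Sz=(w-z_0x^{\ast})^{\T}S_{11}(w-z_0x^{\ast})$. Consequently $S\succeq0$ if and only if $S_{11}=\textrm{Re}(Q)+\textrm{Diag}(d)\succeq0$, where $d_i=x_i^{\ast}[\textrm{Re}(H^{\dag}v)]_i$. Since $|x_i^{\ast}|=1$ we have $\min_i d_i\ge-\|\textrm{Re}(H^{\dag}v)\|_{\infty}$, so Weyl's inequality gives $\lambda_{\min}(S_{11})\ge\lambda_{\min}(\textrm{Re}(Q))-\|\textrm{Re}(H^{\dag}v)\|_{\infty}$, which is strictly positive precisely by hypothesis \eqref{M2condition}. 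I expect this reduction-plus-eigenvalue bound to be the \emph{main obstacle}; everything else is bookkeeping.

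Finally I would conclude tightness. Dual feasibility together with the vanishing gap shows $Z^{\ast}$ is optimal for (BSDP) and that its value equals the (P)-objective at the feasible point $x^{\ast}$; since (BSDP) is a relaxation, the relaxation gap is zero and $x^{\ast}$ is an optimizer of (P). Moreover, the strict inequality $S_{11}\succ0$ forces $\ker S$ to be one-dimensional, so complementary slackness ($\textrm{range}(Z)\subseteq\ker S$ for any optimal $Z$) together with $Z_{0,0}=1$ makes $Z^{\ast}=\hat{x}\hat{x}^{\T}$ the \emph{unique} optimal solution. Thus (BSDP) recovers the true transmitted vector $x^{\ast}$, which, combined with the zero gap, establishes the tightness of (BSDP) for (P).
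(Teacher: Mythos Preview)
Your proposal is correct. The paper does not supply its own proof of this theorem---it is quoted from \cite{So2010}---but the dual-certificate/KKT construction you outline is exactly the argument the paper later adapts for the $M\geq 3$ case (Theorem~\ref{thmgeneral}), and it coincides with the necessary-and-sufficient condition $\textrm{Re}(H^{\dag}H)+[\textrm{Diag}(x^{\ast})]^{-1}\textrm{Diag}(\textrm{Re}(H^{\dag}v))\succeq 0$ from \cite{Jalden,Jalden2006} that the paper cites just after stating the theorem; your $S_{11}\succ 0$ is precisely that condition specialized to the strict inequality, and your Weyl bound is the ``simple verification'' the paper alludes to.
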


For the case where $M=2,$ Theorem 2.1 proposes a sufficient condition under which (BSDP) is tight for (P). In fact, for the same case, similar sufficient conditions for (BSDP) to be tight have been proposed in the literature. 
For instance, the condition proposed in \cite[Theorem 1]{Kisialiou2005} is  \begin{equation}\label{l1new}\lambda_{\min}\left(\textrm{Re}(H^{\dag} H)\right)>\left\| \textrm{Re} (H^{\dag} v)\right\|_{1}\end{equation}
{{and the one proposed in \cite{Kisialiou} is}} 
\begin{equation}\label{l2new}\lambda_{\min}\left(\textrm{Re}(H^{\dag} H)\right)>\left\| \textrm{Re} (H^{\dag} v)\right\|_{2}.\end{equation}
Since $\left\| \textrm{Re} (H^{\dag} v)\right\|_{p}\geq \left\| \textrm{Re} (H^{\dag} v)\right\|_{\infty}$ for $p\in\{1,\,2\}$, condition \eqref{M2condition}
is weaker than the above two.  

{{
In addition to the above sufficient conditions, the following sufficient and necessary condition
for the case where $M=2$ is also proposed in \cite[Theorem 1]{Jalden} and \cite[Theorem 7.1]{Jalden2006}:
Let $H\in\mathbb{R}^{m\times n}$, $v\in \mathbb{R}^m,$ and $x^{\ast}\in\{-1,1\}^n$,  (BSDP) is tight if and only if
$$v\in\left\{u\in\mathbb{R}^m|~ H^{\T}H+[\text{Diag}(x^{\ast})]^{-1} \text{Diag}(H^{\T} u)\succeq 0\right\}.$$
The above condition can be extended to the complex case \cite{Jalden} where
$H\in\mathbb{C}^{m\times n}$, $v\in \mathbb{C}^m,$ and $x^{\ast}\in\{-1,1\}^n$ as follows:
$$v\in\left\{u\in\mathbb{C}^m|~ \textrm{Re}(H^{\dag}H)+[\text{Diag}(x^{\ast})]^{-1} \text{Diag}[\textrm{Re}(H^{\dag} u)]\succeq 0\right\}.$$
It is simple to verify that condition \eqref{M2condition}, as well as conditions \eqref{l1new} and \eqref{l2new}, can be derived from the above sufficient and necessary condition.
}}

For more general cases where $M\geq 3$,
So posed an open question in \cite{So2010}: Is condition \eqref{directcondition} sufficient for (CSDP) to be tight for (P)?
Next, we show that the answer to this question is negative. More specifically, we show that (CSDP) is not tight for almost all instances of (P) with nonzero random noise
in the sense that the probability that there is no gap between (CSDP) and (P) is zero.

To tackle the problem, we first derive a necessary condition for (CSDP) to be tight.
Denote
\begin{align}\label{Qcy}
\hat{Q}=\begin{bmatrix}
\textrm{Re}(Q) ~&-\textrm{Im}(Q)\\[3pt]
\textrm{Im}(Q) ~&\textrm{Re}(Q)\\
\end{bmatrix},~ \hat{c}=\begin{bmatrix} 
\textrm{Re}(c)\\[3pt]
\textrm{Im}(c) \\
\end{bmatrix},~\text{and}~y=\begin{bmatrix} 
\textrm{Re}(x) \\[3pt]
\textrm{Im}(x)\\
\end{bmatrix}.
\end{align}
Then (UQP) in its real form can be written as 
\begin{equation}\label{RUQP}\begin{array}{cl}
\displaystyle \min_{y\in \mathbb{R}^{2n}} ~&~   y^{\T}\hat{Q} y+2\hat{c}^{\T}y\\[3pt]
\mbox{s.t.} ~&~ y_i^2+y_{n+i}^2=1,~i=1,2,\ldots,n.
\end{array}\end{equation} 
The Lagrangian function of \eqref{RUQP} is $$L(y;\lambda)= y^{\T} \hat{Q} y+2\hat{c}^{\T}y+
\sum_{i=1}^n\lambda_i\left(y_{i}^2+y_{n+i}^2-1\right),$$ where $\lambda_i$ is the Lagrange multiplier corresponding to the constraint $y_i^2+y_{n+i}^2=1.$
{{The Lagrangian dual problem of \eqref{RUQP} is
\begin{equation}\label{LD}
\max_{\lambda\in \mathbb{R}^n}~d(\lambda) \tag{D}
\end{equation} where $d(\lambda):=\min_{y\in \mathbb{R}^{2n}} L(y;\lambda)$ is the dual function of problem \eqref{RUQP}.
It is well-known that problem (D) is equivalent to (CSDP) \cite{Lemarechal,Shor}, i.e., the optimal value of problem (D) is equal to that of problem (CSDP).
%

Next, we derive the KKT optimality condition for problem \eqref{RUQP}. It is simple to verify that the linear independent constraint qualification (LICQ) holds for problem \eqref{RUQP}. Let $x^{\ast}$ be an optimal solution of (UQP) and $y^{\ast}$ be the corresponding optimal
solution of problem \eqref{RUQP}. Then there exist $\left\{{\lambda}_i^{\ast}\in \mathbb{R}\right\}_{i=1}^n$ such that
%
%
}}
%
$$\left. \frac{\partial L(y; \lambda)}{\partial y_i}\right|_{y=y^{\ast}}=2\left[\hat{Q} y^{\ast}\right]_{i}+2\hat{c}_i+2{\lambda}_{i} {y^{\ast}_i}=0,~i=1,2,\ldots,n$$
and $$\left. \frac{\partial L(y; \lambda)}{\partial y_{i+n}}\right|_{y=y^{\ast}}=2\left[\hat{Q} y^{\ast}\right]_{i+n}+2\hat{c}_{i+n}+2{\lambda}_{i} {y^{\ast}_{i+n}}=0,~i=1,2,\ldots,n.$$
Since $$2\left[\hat{Q} y^{\ast}\right]_{i}+2\hat{c}_i+2{\lambda}_{i} y^{\ast}_i= 2\textrm{Re}\left(\left[Q{x^{\ast}}\right]_i+c_i\right) +2{\lambda}_{i}\textrm{Re}\left({x^{\ast}_i}\right),~i=1,2,\ldots,n$$
and $$2\left[\hat{Q} y^{\ast}\right]_{i+n}+2\hat{c}_{i+n}+2{\lambda}_{i} {y^{\ast}_{i+n}}=2\textrm{Im}\left(\left[Q{x^{\ast}}\right]_i+c_i\right) +2{\lambda}_{i}\textrm{Im}\left({x^{\ast}_i}\right),~i=1,2,\ldots,n,$$
the KKT condition of (UQP) in the complex form becomes $$\left[Qx^{\ast}\right]_i+c_i+{\lambda}_{i}{x^{\ast}_i}=0,~i=1,2,\ldots,n.$$
Now, we prove the next theorem.

\begin{theorem}
Suppose that $M\geq 2$. If (CSDP) is tight for (P), then there exist $\left\{\lambda_i^{\ast}\in \mathbb{R}\right\}_{i=1}^n$
such that \begin{equation}\label{optcondcqp}\left[H^{\dag} v\right]_i=\lambda_i^{\ast} x_i^{\ast},~i=1,2,\ldots,n,\end{equation} where
$H,~x^{\ast},~\text{and}~v$ are given in \eqref{rHv}. 
\end{theorem}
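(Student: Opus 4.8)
The plan is to combine the relaxation hierarchy linking (CSDP), (UQP), and (P) with the complex KKT condition for (UQP) derived just above, namely $[Qx^*]_i + c_i + \lambda_i^* x_i^* = 0$. The crux is that tightness of (CSDP) forces the true transmitted signal $x^*$ to be an optimal solution of (UQP); once this is in hand, the stated identity follows by substituting $Q = H^\dag H$ and $c = -H^\dag r$ and cancelling.

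First I would record the chain of optimal values: that of (CSDP) is at most that of (UQP), which is at most that of (P). The second inequality is clear because (UQP) is exactly (P) (equivalently (CQP)) with the discrete argument constraints removed, hence a relaxation. For the first inequality I would lift any feasible $x$ of (UQP) to the point $(x, xx^\dag)$ of (CSDP): the constraints $X_{i,i} = 1$ and $X \succeq xx^\dag$ both hold, and since $Q$ is Hermitian one has $Q \bullet xx^\dag = x^\dag Q x$, so the two objective values coincide. Thus (CSDP) is a relaxation of (UQP) as well.

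Next I would invoke the definition of tightness. By assumption the optimal value of (CSDP) equals that of (P), and moreover $x^*$ is the solution recovered by (CSDP), so $x^*$ is an optimal solution of (P). The value chain then collapses to equalities, giving that (UQP) and (P) share the same optimal value. Since $x^*$ obeys the unit-modulus constraints it is feasible for (UQP), and its objective value equals the common optimal value; therefore $x^*$ is an optimal solution of (UQP).

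Finally, because $x^*$ solves (UQP) and LICQ holds, the complex KKT condition applies and furnishes real multipliers $\lambda_i^*$ with $[Qx^*]_i + c_i + \lambda_i^* x_i^* = 0$. Writing $c = -H^\dag r = -H^\dag(Hx^* + v) = -Qx^* - H^\dag v$ shows $c_i = -[Qx^*]_i - [H^\dag v]_i$; substituting this into the KKT equation cancels the $[Qx^*]_i$ terms and leaves $[H^\dag v]_i = \lambda_i^* x_i^*$, which is the claim. I expect the only genuine obstacle to be the careful justification that tightness makes $x^*$ optimal for (UQP) --- in particular getting the orientation of the relaxation chain right and identifying the recovered optimizer with the true transmitted signal --- since the concluding manipulation is a routine substitution.
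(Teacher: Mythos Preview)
Your proof is correct and follows essentially the same approach as the paper. The only cosmetic difference is that the paper invokes the equivalence of (CSDP) with the Lagrangian dual (D) of (UQP) to collapse the value chain, whereas you establish the inequality $\mathrm{val}(\text{CSDP})\leq\mathrm{val}(\text{UQP})$ directly via the lifting $x\mapsto(x,xx^{\dag})$; both routes yield that $x^{\ast}$ is optimal for (UQP), after which the KKT condition and the substitution $Qx^{\ast}+c=-H^{\dag}v$ finish the argument identically.
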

\begin{proof}
Let $\lambda^{\ast}\in \mathbb{R}^n$ be the optimal solution of problem (D). If (CSDP) is tight for (P),
then the KKT condition of (UQP) is satisfied {{at $x^{\ast}$}}, i.e.,
$$\left[Qx^{\ast}+c\right]_i+\lambda_i^{\ast} x_i^{\ast}=0,~i=1,2,\ldots,n.$$ This, together with
$Qx^{\ast}+c=H^{\dagger} Hx^{\ast}-H^{\dag} r=-H^{\dag}v,$ immediately implies the desired result \eqref{optcondcqp}.
\end{proof}

{{ The conditions in \eqref{optcondcqp} imply that either $[H^{\dagger}v]_i$ and $x_i^{\ast}$ have the same phase or $[H^{\dagger}v]_i=0$ for all $i=1,2,\ldots,n$.
However, this is generally not true in real applications. In particular, the noise vector $v$ is often assumed to follow a circularly symmetric complex
Gaussian distribution, and the probability that the event
$$\left\{ [H^{\dagger}v]_i=0 \right\}\bigcup \left\{\arg \left( [H^{\dagger}v]_i \right)=\arg \left(x_i^{\ast} \right)\right\}$$ happens
is zero for each $i=1,\ldots,n$.
Therefore, the probability that all conditions in \eqref{optcondcqp} are simultaneously satisfied is zero.}}
This immediately implies that (CSDP) is generally not tight for (P) (regardless of the condition in \eqref{directcondition}). This answers the open question posed in \cite{So2010}. It is also {{worth noting that (CSDP) and (BSDP) are not equivalent to each other even for the case where $M=2$}}.




\section{An enhanced semidefinite relaxation}

%
%
%
In this section, we propose an enhanced SDR for (CQP).
Recall that the (discrete) argument constraints $\arg\left(x_i\right)\in \mathcal{A}$ for $i=1,2,\ldots,n$ in (CQP) are
ignored in its conventional relaxation (CSDP). The idea of designing the enhanced SDR for (CQP) is to better exploit the structure of
the argument constraints and develop valid linear constraints for them to tighten (CSDP). Since these valid linear
constraints are based on the real form of (CSDP), we first reformulate
(CSDP) as the following real SDR: 
\begin{equation}\begin{array}{cl}
\displaystyle \min_{y,\,Y} ~&~ \hat{Q} \bullet Y +2\hat{c}^{\T}y\\[3pt]
\mbox{s.t.} ~&~ Y_{i,i}+Y_{n+i,n+i}=1,~i=1,2,\ldots,n, \\[5pt]
~&~ \begin{bmatrix}
1 &y^{\T}\\
y &Y\\
\end{bmatrix}
\succeq 0,\tag{RSDP}
\end{array}\end{equation}
where the variables $y\in \mathbb{R}^{2n}$ and $Y\in \mathbb{R}^{2n\times 2n}$ and
$\hat{Q}$, $\hat{c},$ and $y$ are defined in \eqref{Qcy}.

Remark that our real reformulation (RSDP) of complex (CSDP) is not the same as the ones in \cite{Goemans2004} and \cite{Waldspurger}.
The dimension of the matrix variable in (RSDP) is $2n+1$ while the one of the matrix variable in \cite{Goemans2004} and \cite{Waldspurger} is $2n+2$. Hence, the equivalence between (CSDP) and (RSDP) cannot be shown by using the same argument in \cite{Goemans2004} and \cite{Waldspurger}. An equivalence proof of (CSDP) and (RSDP) is provided in Appendix A.

%

Next, we develop some valid linear constraints for the argument constraints based on (RSDP), which leads to an enhanced SDR for (CQP).
First, for (RSDP), we define the following $3\times 3$ matrices:
\begin{equation}\label{Yi}\mathcal{Y}(i):=\begin{bmatrix}
1 ~&y_{i} ~&y_{n+i}\\[3pt]
y_{i} ~&Y_{i,i} ~&Y_{i,n+i}\\[3pt]
y_{n+i} ~&Y_{n+i,i} ~&Y_{n+i,n+i}\\
\end{bmatrix},~i=1,2,\ldots,n.\end{equation} 
{{For each $i=1,\ldots,n$,}} $\mathcal{Y}(i)$ contains the following $5$ variables in (RSDP) (due to its symmetry):
$$y_i,~y_{n+i},~Y_{i,i},~Y_{i,n+i},~\text{and}~Y_{n+i,n+i}.$$
{{
From the definition of $y$ in \eqref{Qcy}, we have
$y_i=\textrm{Re}(x_i)$ and $y_{n+i}=\textrm{Im}(x_i)$. Since
$|x_i|^2=1$ and $\arg{(x_i)}\in \mathcal{A}$}}, it follows  $$\left(y_i,\,y_{n+i}\right)\in \left\{(\cos\left(\theta\right),\,\sin\left(\theta\right))\mid\theta={2j\pi}/{M},~j=0,1,\ldots,M-1 \right\}.$$
Define the following $3\times 3$ real symmetric matrices:
\begin{equation}\label{Pk}P_j=  \begin{bmatrix}
1 \\[3pt]
\cos\left(\frac{2j\pi}{M}\right)\\[5pt]
\sin\left(\frac{2j\pi}{M}\right)
\end{bmatrix}
\begin{bmatrix}
1 &\cos\left(\frac{2j\pi}{M}\right) &\sin\left(\frac{2j\pi}{M}\right)
\end{bmatrix},~j=0,1,\ldots,M-1.\end{equation}
Then, each of $\mathcal{Y}(i)$ must equal one of matrices
$P_j$ with $j=0,1,\ldots,M-1,$ i.e.,
$$\mathcal{Y}(i)\in\left\{P_0, ~P_1,\ldots,P_{M-1}\right\},~i=1,2,\ldots,n.$$
{{The convex envelope}} of the above constraints are
$$\mathcal{Y}(i)\in \textrm{Conv} \left\{P_0, ~P_1,\ldots,P_{M-1}\right\},~i=1,2,\ldots,n,$$
which are equivalent to
$$\mathcal{Y}(i)=\sum_{j=0}^{M-1} t_{i,j} P_j,~\sum_{j=0}^{M-1} t_{i,j}=1,~t_{i,j}\geq 0,~j=0,1,\ldots,M-1,~i=1,2,\ldots,n.$$
Due to the symmetry of $\mathcal{Y}(i),$ the constraint $\mathcal{Y}(i)=\sum_{j=0}^{M-1} t_{i,j} P_j$ can be explicitly expressed as the following $5$ linear constraints:
\begin{equation}\label{linearcons}
\begin{array}{rl}
\displaystyle y_i&\!\!\!\!=\displaystyle\sum_{j=0}^{M-1} t_{i,j} \cos\left(\frac{2j\pi}{M}\right),~y_{n+i}=\sum_{j=0}^{M-1} t_{i,j} \sin\left(\frac{2j\pi}{M}\right), \\[12pt]
\displaystyle Y_{i,i}&\!\!\!\!=\displaystyle\sum_{j=0}^{M-1} t_{i,j} \cos^2\left(\frac{2j\pi}{M}\right),~Y_{n+i,n+i}=\sum_{j=0}^{M-1} t_{i,j} \sin^2 \left(\frac{2j\pi}{M}\right), \\[12pt]
\displaystyle Y_{i,n+i}&\!\!\!\!=\displaystyle\sum_{j=0}^{M-1} t_{i,j} \cos \left(\frac{2j\pi}{M}\right) \sin\left(\frac{2j\pi}{M}\right).
\end{array}
\end{equation}Obviously, the above equations 
and $\sum_{j=0}^{M-1} t_{i,j}=1$ imply $Y_{i,i}+Y_{n+i,n+i}=1.$ By dropping redundant constraints, we get the following enhanced SDR for (CQP):
\begin{equation}
\begin{array}{cl}
\displaystyle \min_{y,\,Y,\,t} ~&~  \hat{Q} \bullet Y + 2\hat{c}^{\T}y\\[3pt]
\mbox{s.t.} ~&~\displaystyle \mathcal{Y}(i)=\sum_{j=0}^{M-1} t_{i,j} P_j,~\sum_{j=0}^{M-1} t_{i,j}=1,~i=1,2,\ldots,n,\\[12pt] \tag{ERSDP}
~&~t_{i,j}\geq 0,~i=1,2,\ldots,n,~j=0,1,\ldots,M-1,\\[6pt]
~&~\left[
   \begin{array}{cc}
     1 & y^{\T} \\
     y & Y \\
   \end{array}
 \right]\succeq 0,
\end{array}
\end{equation} where the variables $y\in \mathbb{R}^{2n},$ $Y\in \mathbb{R}^{2n\times 2n},$ $t\in \mathbb{R}^{n\times M},$ $\hat Q$ and $\hat c$ are defined in \eqref{Qcy}, $\mathcal{Y}(i)$ is defined in \eqref{Yi}, and $P_j$ is defined in \eqref{Pk}.

We term the above real SDP ``ERSDP'', since $\mathcal{Y}(i)$ for all $i=1,2,\ldots,n$ in (ERSDP) are constrained in the convex envelope with the \textbf{E}xtreme points being $P_0, P_1,\ldots, P_{M-1},$ i.e., the variables $y_i,~y_{n+i},~Y_{i,i},~Y_{i,n+i},~\text{and}~Y_{n+i,n+i}$ must satisfy the linear constraints given in \eqref{linearcons}, which is the main difference between (ERSDP) and (RSDP). Hence, (ERSDP) is (strictly) tighter than (RSDP) (which is equivalent to (CSDP)).

{{
It is worth noting that the proposed (ERSDP) is not the first SDR for the MIMO detection problem (P) that is stronger/tighter than (CSDP).
For instance, both the SDRs proposed in \cite{Fan,Mobasher}
are customized for problem (P). The proposed (ERSDP)
is different from those in \cite{Fan,Mobasher} in the sense that the matrix
variables in these relaxations are lifted from different spaces. 
}}

\section{Tightness of (ERSDP)}\label{sec:exactness}
In this section, we study the tightness of the newly proposed SDP relaxation (ERSDP).

Let us first look at a case where $M=2.$ In this case, $$P_0 = \left[
          \begin{array}{ccc}
       ~1 & ~1 & ~0\\
~1 & ~1 & ~0\\
~0 & ~0 & ~0\\
          \end{array}
        \right]
~\text{and}~P_1 = \left[
          \begin{array}{ccc}
~1 & -1 & ~0\\
-1 & ~1 & ~0\\
~0 & ~0 & ~0\\
          \end{array}
        \right]$$ and the linear constraints in \eqref{linearcons} reduce to
        $$Y_{i,i}=1,~y_i=t_{i,1}-t_{i,2},~\text{and}~y_{n+i}=Y_{n+i,i}=Y_{n+i,n+i}=0.$$ 
By dropping the zero blocks in the matrix $Y$, (ERSDP) reduces to (BSDP). Therefore, it follows from Theorem \ref{thm:so} that condition \eqref{M2condition} is sufficient for
(ERSDP) to be tight for problem (P) where $M=2$.

In the remainder of this section, we study the tightness of (ERSDP) for (P) where $M\geq 3$. We prove that condition \eqref{condition} is sufficient for (ERSDP) to be tight for (P).
Our proof consists of two main steps: \textbf{Step I}, we derive a complex SDR called (CSDP2) based on (ERSDP) and show that
(ERSDP) is tighter than (CSDP2); \textbf{Step II}, we show that (CSDP2) is tight for (P) where $M\geq 3$ under certain condition and hence
(ERSDP) is also tight for (P) under the same condition.

\textbf{Step I.} We derive a complex SDR to be called (CSDP2) from (ERSDP). Recall that, for any feasible solution $(y,\,Y)$ of (ERSDP), $\mathcal{Y}(i)$ in \eqref{Yi}
must lie in the convex envelope of $\{P_0,\ldots,P_{M-1}\},$ which implies that
\begin{equation}\label{yiyni}\left(y_i,\,y_{i+n}\right)\in \textrm{Conv}\left\{\left(\cos\left(\theta\right),\,\sin\left(\theta\right)\right)\mid\theta\in \mathcal{A}\right\},\end{equation} 
where $\mathcal{A}$ is defined in \eqref{setA}. Note that $y_i$ and $y_{i+n}$ correspond to the real and imaginary parts of the complex variable $x_i$. Then, one can show that \eqref{yiyni} is equivalent to $x_i\in \mathcal{F},$
where $\mathcal{F}$ is the convex envelope of the set 
$$\mathcal{D}=\left\{ w\mid|w|=1,~\arg(w)\in \mathcal{A}\right\}.$$ For the case where $M\geq 3$, by using a similar argument of showing Proposition 1 in \cite{luliu2017tsp}, the set $\mathcal{F}$
can be represented as
 $$\left\{ w\in \mathbb{C}\mid \textrm{Re}(a^{\dag}_j w) \leq  \cos\left(\frac{\pi}{M}\right),~j=0,1,\ldots,M-1\right\},$$ where
\begin{equation}\label{atheta}a_j=e^{ \textbf{i}\theta_j}~\text{and}~\theta_j=\frac{(2j-1)\pi}{M}.\end{equation}
Furthermore, $\textrm{Re}(a^{\dag}_j w) \leq  \cos\left(\frac{\pi}{M}\right)$ is equivalent to
$$\cos\left(\theta_j\right)\textrm{Re}(w)+\sin\left(\theta_j\right)\textrm{Im}(w) \leq  \cos\left(\frac{\pi}{M}\right).$$ The line associated with the above half plane connects the two points $$\cos\left(\frac{2(j-1)\pi}{M}\right)+\sin\left(\frac{2(j-1)\pi}{M}\right)\textbf{i}~\text{and}~\cos\left(\frac{2j\pi}{M}\right)+\sin\left(\frac{2j\pi}{M}\right)\textbf{i}$$ in the complex domain. See Figure 1 for an illustration of the set $\mathcal{F}$ for the case where $M=8.$

Based on the above observations, we obtain the following complex SDR:
\begin{equation}
\begin{array}{cl}
\displaystyle \min_{x,\,X} ~&~ Q\bullet X + 2\textrm{Re}(c^{\dag}x)  \\[3pt]
\mbox{s.t.} ~&~ X_{ii}=1,~i=1,2,\ldots,n,\tag{CSDP2}\\[3pt]
    ~&~ \textrm{Re}(a^{\dag}_j x_i) \leq  \cos\left(\frac{\pi}{M}\right),~i=1,2,\ldots,n,~j=0,1,\ldots,M-1,\\[5pt]
            ~&~ \begin{bmatrix}
1 &x^{\dag}\\
x &X\\
\end{bmatrix}
\succeq 0,
\end{array}
\end{equation} where the variables $x \in \mathbb{C}^{n}$ and $X \in \mathbb{C}^{n\times n}.$

Two remarks on the comparison between (CSDP2) and  (ERSDP) are in order. First,
since the constraints
$\textrm{Re}(a^{\dag}_j x_i) \leq  \cos\left(\frac{\pi}{M}\right)$ for all $i=1,2,\ldots,n$ and $j=0,1,\ldots,M-1$ in (CSDP2)
are derived from (ERSDP), we know that (ERSDP) is at least as tight as (CSDP2). In fact, we have the following result.
\begin{theorem}\label{thmtighter}
  (ERSDP) is tighter than (CSDP2).
\end{theorem}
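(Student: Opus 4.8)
The plan is to prove the comparison by exhibiting a feasibility- and objective-preserving map from (ERSDP) into (CSDP2). Since both are relaxations of the same minimization problem, producing such a map shows that the optimal value of (CSDP2) is no larger than that of (ERSDP), i.e. that (ERSDP) is a tightening of (CSDP2); this is exactly what is needed later, because (ERSDP) is itself a relaxation of (P), so once (CSDP2) is tight the sandwich $\textrm{val(CSDP2)}\le \textrm{val(ERSDP)}\le \textrm{val(P)}=\textrm{val(CSDP2)}$ forces (ERSDP) to be tight as well.

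First I would fix an arbitrary feasible triple $(y,Y,t)$ of (ERSDP) and define a candidate point of (CSDP2) through the real-to-complex identification already used to pass between (RSDP) and (CSDP): set $x_i=y_i+\textbf{i}\,y_{n+i}$ for $i=1,\dots,n$, and $X_{ij}=(Y_{i,j}+Y_{n+i,n+j})+\textbf{i}\,(Y_{n+i,j}-Y_{i,n+j})$. A one-line computation using the symmetry of $Y$ shows $X$ is Hermitian. I would then verify the three groups of constraints of (CSDP2). For the diagonal, the identities \eqref{linearcons} together with $\sum_j t_{i,j}=1$ give $Y_{i,i}+Y_{n+i,n+i}=1$ and $Y_{n+i,i}=Y_{i,n+i}$, so $X_{ii}=1$. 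For the linear (half-plane) constraints, the membership $\mathcal{Y}(i)\in\textrm{Conv}\{P_0,\dots,P_{M-1}\}$ displays $(y_i,y_{n+i})=\sum_j t_{i,j}\bigl(\cos(2j\pi/M),\sin(2j\pi/M)\bigr)$ as a convex combination of the vertices of $\mathcal{F}$; since $y_i=\textrm{Re}(x_i)$ and $y_{n+i}=\textrm{Im}(x_i)$, this places $x_i\in\mathcal{F}$, which by the half-plane description of $\mathcal{F}$ derived in Step I is precisely $\textrm{Re}(a_j^{\dag}x_i)\le\cos(\pi/M)$ for all $j$.

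For the conic constraint and the objective I would invoke the equivalence of (CSDP) and (RSDP) proved in Appendix A: every feasible point of (ERSDP) is in particular feasible for (RSDP), and that equivalence sends the real positive semidefinite matrix $\bigl[\begin{smallmatrix}1 & y^{\T}\\ y & Y\end{smallmatrix}\bigr]$ to the complex positive semidefinite matrix $\bigl[\begin{smallmatrix}1 & x^{\dag}\\ x & X\end{smallmatrix}\bigr]$ under exactly the identification above, while preserving the objective, so that $\hat{Q}\bullet Y+2\hat{c}^{\T}y=Q\bullet X+2\textrm{Re}(c^{\dag}x)$. Assembling these facts, $(x,X)$ is feasible for (CSDP2) with the same objective value as $(y,Y,t)$, giving $\textrm{val(CSDP2)}\le \textrm{val(ERSDP)}$. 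The conceptual point I would stress is that the map discards exactly the extra information carried by (ERSDP): the full $3\times3$ identities $\mathcal{Y}(i)=\sum_j t_{i,j}P_j$ pin down the within-block second moments $Y_{i,i},Y_{n+i,n+i},Y_{i,n+i}$, yet only the combinations $Y_{i,i}+Y_{n+i,n+i}$ and $Y_{n+i,i}-Y_{i,n+i}$ appear in $X$, so these moments are invisible to (CSDP2); this is precisely why (CSDP2) is the weaker relaxation.

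The hardest part will be the strict separation. The map is many-to-one: a given (CSDP2)-feasible $(x,X)$ admits an entire affine family of real preimages $Y$, obtained by choosing the hidden within-block quantities and the symmetric/antisymmetric remainders of the cross blocks subject only to the global real positive semidefinite constraint. To exhibit a point of (CSDP2) lying outside the image of (ERSDP), one must therefore certify that \emph{no} such preimage meets the within-block convex-hull constraints. For $n=1$ these hidden entries decouple and the two feasible regions in fact coincide, so no separation is possible there; I would look instead at $n\ge 2$, placing the $x_i$ at interior points of $\mathcal{F}$ together with a cross term (e.g.\ $X_{12}$) chosen large enough that global positive semidefiniteness of $\bigl[\begin{smallmatrix}1 & y^{\T}\\ y & Y\end{smallmatrix}\bigr]$ pushes the forced within-block moments outside $\textrm{Conv}\{P_0,\dots,P_{M-1}\}$ for every completion. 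Producing such a witness, or equivalently an infeasibility certificate ruling out all preimages, is the delicate computation I expect to be the main obstacle.
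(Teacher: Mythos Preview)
Your first half---the feasibility- and objective-preserving map from (ERSDP) to (CSDP2) via the real-to-complex identification $x_i=y_i+\textbf{i}\,y_{n+i}$ and $X=(A+C)+(B^{\T}-B)\textbf{i}$---is exactly what the paper does in Appendix~B, leaning on Appendix~A for the positive-semidefiniteness and objective preservation, and on the half-plane description of $\mathcal{F}$ for the linear constraints. That part is correct and complete.

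Where you diverge from the paper is in the strict-separation step, and you are making it much harder than necessary. The paper does \emph{not} try to exhibit a (CSDP2)-feasible point with no (ERSDP) preimage; instead it simply writes down one concrete numerical instance ($m=n=2$, $M=3$, a specific $H$, $v$, and $x^{\ast}$), solves all three SDPs, and reports that the optimal values of (CSDP), (CSDP2), and (ERSDP) are numerically distinct. That single example already certifies strict tightening and requires no structural argument at all. Your proposed route---forcing the hidden within-block moments outside $\textrm{Conv}\{P_0,\dots,P_{M-1}\}$ for \emph{every} real completion of a chosen $(x,X)$---would, if carried through, yield a more conceptual explanation of where the extra strength of (ERSDP) comes from, and your observation that $n=1$ cannot separate the two relaxations is correct and insightful. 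But since you yourself flag this as the main obstacle and do not complete it, you should be aware that the paper bypasses the difficulty entirely with a numerical witness; for the purposes of the theorem and its downstream use in Theorem~\ref{thmM3exact}, nothing more is needed.
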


A rigorous proof of Theorem \ref{thmtighter} can be found in Appendix B.
Therefore, if (CSDP2) is tight for (P), then (ERSDP) must also be tight.
Second, (CSDP2) is tight enough for us to derive our main results in Theorems \ref{thmgeneral} and \ref{thmM3exact}. 
Our analysis is based on a simpler reformulation of (CSDP2) and its dual.

\begin{figure}[!t]
\centering
\includegraphics[width=10.5cm]{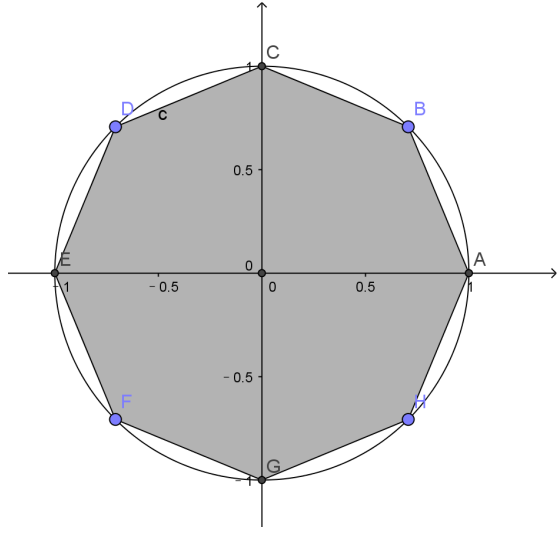}
\caption{An illustration of the set $\mathcal{F}$ for the case where $M=8$, which can be represented by $8$ linear constraints.}
\end{figure}

%

\textbf{Step II.} We show the tightness of (CSDP2) in this part. 
Our {{proof is similar to the one of showing Theorem 1 in \cite{Jalden}}. The basic idea
is to construct a dual feasible solution such that $\left(x^{\ast},\,X^{\ast}\right)$ and the constructed
dual solution jointly satisfy the KKT optimality conditions of (CSDP2) under some conditions, where $X^{\ast}=x^{\ast}\left(x^{\ast}\right)^{\dag}$.} The dual problem of (CSDP2) is 
\begin{equation}\label{dualproblem}\begin{array}{cl}
\displaystyle \max_{\lambda,\,\tau,\,\mu} ~&~ \displaystyle  \tau-\sum_{i=1}^n \lambda_i - \cos\left(\frac{\pi}{M}\right) \sum_{i=1}^n \sum_{j=0}^{M-1}\mu_{i,j} \\[8pt]
\mbox{s.t.} ~&\begin{bmatrix}
-\tau &\left(c+g\right)^{\dag}\\[3pt]
c+g ~&Q+ \text{Diag}(\lambda) \\
\end{bmatrix}\succeq 0,\\[13pt]
&\mu_{i,j}\geq 0,~i=1,2,\ldots,n,~j=0,1,\ldots,M-1,
\end{array}\end{equation}
where the variables $\lambda\in \mathbb{R}^n,\,\tau\in \mathbb{R},\,\mu\in \mathbb{R}^{n\times M}$ and
$g=[g_1, g_2,\ldots,g_n]^{\T}$ with \begin{equation}\label{gi}g_i= \sum_{j=0}^{M-1}\frac{\mu_{i,j}}{2}a_j ,~i=1,2,\ldots,n.\end{equation}
For completeness, a detailed derivation of the above dual problem is provided in Appendix C.
Since both problems (CSDP2) and its dual \eqref{dualproblem} are strictly feasible, 
it follows that the primal-dual Slater's conditions are satisfied.
Suppose that $(x,\,X)$ is an optimal solution of (CSDP2).
Then there {{must exist a feasible solution}} $\left({\lambda},\,{\tau},\,{\mu}\right)$ (to problem \eqref{dualproblem}) such that $(x,\,X)$ and $\left({\lambda},\,{\tau},\,{\mu}\right)$ jointly satisfy the following complementarity conditions:
\begin{equation}\label{complement1}\mu_{i,j}\left(\cos\left(\frac{\pi}{M}\right)-\textrm{Re}\left(a^{\dag}_j x_i\right)\right)=0,~i=1,2,\ldots,n,~j=0,1,\ldots,M-1\end{equation}
and
\begin{equation}\label{complement2}
\begin{bmatrix}
-{\tau} ~&\left(c+{g}\right)^{\dag}\\[3pt]
c+{g} ~&Q+ \text{Diag}(\lambda) \\
\end{bmatrix}
\bullet
\begin{bmatrix}
1 ~& x^{\dag}\\[2pt]
x ~& X \\
\end{bmatrix}=0.
\end{equation}

%
%
We are ready to present our main results in this paper.

{{

\begin{theorem}\label{thmgeneral}
Suppose that $M\geq 3.$ Let $x^{\ast}$ with \begin{equation}\label{xisi}x_i^{\ast}=e^{2\pi s_i \textbf{i}/M},~s_i\in\{0,1,\ldots,M-1\},~i=1,2,\ldots,n\end{equation} be the vector of transmitted signals.
Define \begin{equation}\label{ti}t_i=\left\{\begin{array}{cl}
s_i+1,&\textrm{if}~s_i<M-1;\\[3pt]
0,&\textrm{if}~s_i=M-1,
\end{array}\right.~i=1,2,\ldots,n.\end{equation}
Then $\left(x^{\ast},\,X^{\ast}\right)$ is an optimal solution to
(CSDP2) if and only if there exist \begin{equation}\label{existence}\bar{\lambda}_i\in \mathbb{R},~\bar{\mu}_{i,s_i}\geq 0,~\text{and}~\bar{\mu}_{i,t_i}\geq 0,~i=1,2,\ldots,n\end{equation}
such that $H$ and $v$ in \eqref{rHv} satisfy \begin{equation}\label{Hv}[H^{\dag}v]_i=\bar{\lambda}_i x_i^{\ast}+ \frac{\bar{\mu}_{i,s_i}}{2} a_{s_i}+ \frac{\bar{\mu}_{i,t_i}}{2} a_{t_i},~i=1,2,\ldots,n,\end{equation}
and \begin{equation}\label{Qlambdabar}Q+\textrm{Diag}(\bar{\lambda})\succeq 0.\end{equation} Furthermore, if $Q+\textrm{Diag}(\bar{\lambda})\succ 0$, then
$\left(x^{\ast},\,X^{\ast}\right)$ is the unique optimal solution to (CSDP2).
\end{theorem}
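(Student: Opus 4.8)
The plan is to characterize optimality of $(x^{\ast},X^{\ast})$ entirely through the KKT system. Since both (CSDP2) and its dual \eqref{dualproblem} are strictly feasible, strong duality holds, so a primal feasible point is optimal \emph{if and only if} there is a dual feasible triple $(\lambda,\tau,\mu)$ jointly satisfying the complementarity relations \eqref{complement1} and \eqref{complement2} (the forward implication is the passage quoted above; the reverse follows from weak duality, since a primal-feasible point and a dual-feasible point obeying complementarity have zero duality gap). First I would record that $(x^{\ast},X^{\ast})$ is primal feasible: $X^{\ast}_{i,i}=|x_i^{\ast}|^2=1$, the lifted matrix $Z:=\begin{bmatrix}1 & (x^{\ast})^{\dag}\\ x^{\ast} & X^{\ast}\end{bmatrix}=\begin{bmatrix}1\\ x^{\ast}\end{bmatrix}\begin{bmatrix}1 & (x^{\ast})^{\dag}\end{bmatrix}\succeq 0$ is rank one, and the linear inequalities hold. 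The whole proof then reduces to translating \eqref{complement1} and \eqref{complement2} into the stated conditions \eqref{Hv} and \eqref{Qlambdabar}.

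The first step is to identify which linear constraints are active at $x_i^{\ast}$. A direct computation with $a_j=e^{\textbf{i}\theta_j}$ from \eqref{atheta} gives $\textrm{Re}(a_j^{\dag}x_i^{\ast})=\cos\!\left(\tfrac{(2s_i-2j+1)\pi}{M}\right)$, which equals $\cos(\pi/M)$ exactly for $j=s_i$ and $j=t_i$ (using the definition of $t_i$ in \eqref{ti} to wrap around when $s_i=M-1$, and here $M\geq 3$ guarantees $s_i\neq t_i$), and is strictly smaller for every other $j$. Hence \eqref{complement1} forces $\mu_{i,j}=0$ for all $j\notin\{s_i,t_i\}$, so that $g_i=\tfrac{\mu_{i,s_i}}{2}a_{s_i}+\tfrac{\mu_{i,t_i}}{2}a_{t_i}$ in \eqref{gi}. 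This is a routine trigonometric calculation, and I expect no difficulty here.

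Next I would exploit the rank-one structure. Writing $v_0=\begin{bmatrix}1\\ x^{\ast}\end{bmatrix}$ and letting $S$ denote the dual slack matrix in \eqref{complement2}, the facts that $S\succeq 0$, $Z=v_0v_0^{\dag}\succeq 0$, and $S\bullet Z=0$ force $Sv_0=0$. Reading off its two block rows yields the stationarity relation $(Q+\textrm{Diag}(\lambda))x^{\ast}=-(c+g)$ together with $\tau=(c+g)^{\dag}x^{\ast}$ (automatically real). Substituting the MIMO identity $Qx^{\ast}+c=H^{\dag}Hx^{\ast}-H^{\dag}r=-H^{\dag}v$ gives, componentwise, precisely \eqref{Hv} with $\bar{\lambda}_i=\lambda_i$ and $\bar{\mu}_{i,s_i},\bar{\mu}_{i,t_i}$ the surviving multipliers; dual feasibility $S\succeq 0$ makes its trailing diagonal block $Q+\textrm{Diag}(\bar{\lambda})$ positive semidefinite, i.e.\ \eqref{Qlambdabar}, and $\mu\geq 0$ supplies the sign constraints in \eqref{existence}. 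This proves the ``only if'' direction. For ``if'' I would reverse the computation: given multipliers satisfying \eqref{existence}, \eqref{Hv}, and \eqref{Qlambdabar}, set $\lambda=\bar{\lambda}$, $\mu$ to the given values (zero otherwise), and $\tau=-(x^{\ast})^{\dag}(Q+\textrm{Diag}(\bar{\lambda}))x^{\ast}$, then verify every KKT condition. The one nontrivial check is $S\succeq 0$, which I expect to be the main obstacle; I would settle it with the completion-of-squares identity
\begin{equation*}
\begin{bmatrix}\bar{u} & w^{\dag}\end{bmatrix} S \begin{bmatrix}u\\ w\end{bmatrix}=(w-ux^{\ast})^{\dag}\bigl(Q+\textrm{Diag}(\bar{\lambda})\bigr)(w-ux^{\ast})\geq 0,
\end{equation*}
valid for all $u\in\mathbb{C}$, $w\in\mathbb{C}^{n}$, which follows from $c+g=-(Q+\textrm{Diag}(\bar{\lambda}))x^{\ast}$ and uses only \eqref{Qlambdabar}.

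Finally, for uniqueness I would reuse this identity: when $Q+\textrm{Diag}(\bar{\lambda})\succ 0$, the quadratic form above vanishes only when $w=ux^{\ast}$, so $\ker S=\textrm{span}\{v_0\}$. Any optimal solution $(\tilde{x},\tilde{X})$ shares the dual optimizer, hence its lifted matrix $\tilde{Z}\succeq 0$ satisfies $S\tilde{Z}=0$, giving $\textrm{range}(\tilde{Z})\subseteq\ker S=\textrm{span}\{v_0\}$ and thus $\tilde{Z}=\alpha v_0v_0^{\dag}$ for some $\alpha\geq 0$; the normalization $\tilde{Z}_{1,1}=1$ forces $\alpha=1$, so $\tilde{Z}=Z$ and $(\tilde{x},\tilde{X})=(x^{\ast},X^{\ast})$. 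The recurring crux is the positive-semidefiniteness analysis of the dual slack $S$ via completion of squares; the active-set identification and the substitution of the MIMO identity are straightforward.
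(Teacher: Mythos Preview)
Your proposal is correct and follows essentially the same KKT-based route as the paper: identify the two active inequality constraints at $x_i^{\ast}$ via complementarity, read off stationarity from $Sv_0=0$ to obtain \eqref{Hv}, and verify dual feasibility of the constructed slack $S$ through the factorization $S=\begin{bmatrix}(x^{\ast})^{\dag}\\ -I_n\end{bmatrix}(Q+\textrm{Diag}(\bar{\lambda}))\begin{bmatrix}x^{\ast} & -I_n\end{bmatrix}$, which is exactly your completion-of-squares identity. The only substantive difference is the uniqueness step: the paper simply cites \cite[Theorem~10]{uniqueness}, whereas you give a direct, self-contained argument from $\ker S=\mathrm{span}\{v_0\}$ and the normalization $\tilde{Z}_{1,1}=1$, which is a cleaner and more transparent way to close that part.
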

}}
\begin{proof}
{{
We first prove the necessary direction. Assume that {{$\left(x^{\ast},\,X^{\ast}\right)$}} is an optimal solution of (CSDP2) and
$\left({\lambda}^{\ast},\,{\tau}^{\ast},\,{\mu}^{\ast}\right)$ is an optimal solution of dual problem \eqref{dualproblem}. Denote ${\Lambda}^{\ast}=\textrm{Diag}\left({\lambda}^{\ast}\right)$. Then, from the optimality conditions, we have
\begin{equation}\label{dualfeasible}\begin{bmatrix}
-{\tau}^{\ast} ~&\left(c+{g}^{\ast}\right)^{\dag}\\[3pt]
c+{g}^{\ast} ~&Q+ {\Lambda}^{\ast} \\
\end{bmatrix}\succeq 0,\end{equation}
\begin{equation}\label{complementx2}
\begin{bmatrix}
-{\tau}^{\ast} ~&\left(c+{g}^{\ast}\right)^{\dag}\\[3pt]
c+{g}^{\ast} ~&Q+ {\Lambda}^{\ast} \\
\end{bmatrix}
\bullet
\begin{bmatrix}
1 ~& \left(x^{\ast}\right)^{\dag}\\[2pt]
x^{\ast} ~& X^{\ast} \\
\end{bmatrix}
=
\begin{bmatrix}
1 \\[2pt]
x^{\ast}\\
\end{bmatrix}^{\dag}
\begin{bmatrix}
-{\tau}^{\ast} ~&\left(c+{g}^{\ast}\right)^{\dag}\\[3pt]
c+{g}^{\ast} ~&Q+ {\Lambda}^{\ast} \\
\end{bmatrix}
\begin{bmatrix}
1 \\[2pt]
x^{\ast} \\
\end{bmatrix}
=0,
\end{equation} and
\begin{equation}\label{mucomplementarity}\mu^{\ast}_{i,j}\left(\cos\left(\frac{\pi}{M}\right)-\textrm{Re}\left(a^{\dag}_j x^{\ast}_i\right)\right)=0,~i=1,2,\ldots,n,~j=0,1,\ldots,M-1.\end{equation}
From \eqref{dualfeasible}, we immediately get \begin{equation}\label{Qlambda}Q+ {\Lambda}^{\ast}\succeq 0.\end{equation}
Furthermore, it follows from \eqref{dualfeasible} and \eqref{complementx2} that
\begin{equation}
\begin{bmatrix}
-{\tau}^{\ast} ~&\left(c+{g}^{\ast}\right)^{\dag}\\[3pt]
c+{g}^{\ast} ~&Q+ {\Lambda}^{\ast} \\
\end{bmatrix}
\begin{bmatrix}
1 \\[2pt]
x^{\ast} \\
\end{bmatrix}
=0,
\end{equation}
which implies $c+{g}^{\ast}=-\left(Q+ {\Lambda}^{\ast}\right)x^{\ast}.$
By this and the facts that $Q=H^{\dag} H$ and $c=-H^{\dag} r=-H^{\dag} Hx^{\ast}- H^{\dag}v,$ we have
\begin{equation}\label{Hvgx}
H^{\dag}v=g^{\ast}+{\Lambda}^{\ast} x^{\ast}.\end{equation}
Moreover, by the complementarity condition \eqref{mucomplementarity}, we obtain $\mu^{\ast}_{i,j}=0$ for $j\notin\{s_i,t_i\}$ and $\mu^{\ast}_{i,j}\geq0$ for $j\in\{s_i,t_i\}$.
This, together with the definition of $g_i$ (cf. \eqref{gi}), shows $${g}_i^{\ast}= \sum_{j=0}^{M-1} \frac{{\mu}^{\ast}_{i,j}}{2} a_j= \frac{{\mu}^{\ast}_{i,s_i}}{2} a_{s_i}+\frac{{\mu}^{\ast}_{i,t_i}}{2} a_{t_i},~i=1,2,\ldots,n.$$
Substituting the above into \eqref{Hvgx}, we immediately obtain
\begin{equation}\label{Hvi}[H^{\dag}v]_i=\lambda^\ast_i x_i^{\ast}+ \frac{\mu^{\ast}_{i,s_i}}{2} a_{s_i}+ \frac{\mu^{\ast}_{i,t_i}}{2} a_{t_i},~i=1,2,\ldots,n.\end{equation}
Let $\bar{\lambda}=\lambda^{\ast}$, $\bar{\mu}_{i,s_i}=\mu^{\ast}_{i,s_i},$ and  $\bar{\mu}_{i,t_i}=\mu^{\ast}_{i,t_i}$ for $i=1,\ldots,n$.
Hence, \eqref{Qlambda} and \eqref{Hvi} become \eqref{Qlambdabar} and \eqref{Hv}, respectively. The proof for the necessity of the condition is completed.


Next, we shall prove that the condition is sufficient too.}} Under the condition, we construct a
feasible solution $\left({\lambda}^{\ast},\,{\tau}^{\ast},\,{\mu}^{\ast}\right)$ for problem \eqref{dualproblem} as follows: \begin{equation}\label{lambda}\lambda_i^* = \bar{\lambda}_i,~i=1,2,\ldots,n\end{equation} and
\begin{equation}\label{mu}\mu^\ast_{i,j}=\left\{\begin{array}{@{}ll}
\bar{\mu}_{i,j},~~\textrm{if}~j \in \{s_i,t_i\}\\[3pt]
0,~~~~~\textrm{if} ~j \notin \{s_i,t_i\}
\end{array}\right.,~i=1,2,\ldots,n,~j=0,1,\ldots,M-1,\end{equation}
where $\bar{\lambda}_i,~\bar{\mu}_{i,s_i},$ and $\bar{\mu}_{i,t_i}$ satisfy the conditions stipulated in \eqref{existence} and \eqref{Hv}. Set
$${g}_i^{\ast}= \sum_{j=0}^{M-1} \frac{{\mu}^{\ast}_{i,j}}{2} a_j,~i=1,2,\ldots,n$$ and denote ${\Lambda}^{\ast}=\text{Diag}({\lambda}^{\ast}).$
By \eqref{Hv}, \eqref{lambda}, and \eqref{mu}, we obtain
$H^{\dag} v= {\Lambda}^* x^{\ast} + {g}^{\ast},$ which, together with the definitions of $Q$ and $c,$ further implies
$$c+{g}^*=-H^{\dag} Hx^{\ast}- H^{\dag}v+ {g}^{\ast}=-H^{\dag} Hx^{\ast}-{\Lambda}^* x^{\ast}=-\left(Q+{\Lambda}^{\ast}\right)x^{\ast}.$$
Furthermore, set
\begin{equation}\label{sigma}
{\tau}^{\ast}=\left(x^{\ast}\right)^{\dag} \left(Q+{\Lambda}^{\ast}\right)x^{\ast}+ 2\textrm{Re}\left(\left(c+{g}^{\ast}\right)^{\dag} x^{\ast} \right)
=-\left(x^{\ast}\right)^{\dag} \left(Q+{\Lambda}^{\ast}\right)x^{\ast}.
\end{equation}
Clearly, under the condition $Q+ {\Lambda}^*\succeq 0$, we have $$\begin{bmatrix}
-{\tau}^{\ast} ~&\left(c+{g}^{\ast}\right)^{\dag}\\[3pt]
c+{g}^{\ast} ~&Q+ {\Lambda}^* \\
\end{bmatrix}
=
\begin{bmatrix}
\left(x^{\ast}\right)^{\dag}\\[3pt]
-I_n\\
\end{bmatrix}
\left(Q+ {\Lambda}^{\ast}\right)
\begin{bmatrix}
x^{\ast} &-I_n \\
\end{bmatrix}
\succeq 0.$$ Therefore, the above constructed $\left({\lambda}^{\ast},\,{\tau}^{\ast},\,{\mu}^{\ast}\right)$ is
feasible to problem \eqref{dualproblem}.

Then, we show that the pairs $\left(x^{\ast},\,X^{\ast}\right)$  and $\left({\lambda}^{\ast},\,{\tau}^{\ast},\,{\mu}^{\ast}\right)$ jointly satisfy the complementarity conditions \eqref{complement1} and \eqref{complement2}.
Note that ${\mu}^{\ast}_{i,j}=0$ if $j \notin \{s_i,\,t_i\}$ (cf. \eqref{mu})
and $\cos\left(\frac{\pi}{M}\right)-\textrm{Re}(a^{\dag}_j x_i^{\ast})=0$ if $j \in \{s_i,\,t_i\}$~(from \eqref{atheta} and \eqref{ti}). Consequently, \eqref{complement1} is true. By simple calculation, we have
\begin{equation}\label{complementx}
\begin{bmatrix}
-{\tau}^{\ast} ~&\left(c+{g}^{\ast}\right)^{\dag}\\[3pt]
c+{g}^{\ast} ~&Q+ {\Lambda}^{\ast} \\
\end{bmatrix}
\bullet
\begin{bmatrix}
1 ~& \left(x^{\ast}\right)^{\dag}\\[2pt]
x^{\ast} ~& X^{\ast} \\
\end{bmatrix}=\left(x^{\ast}\right)^{\dag} \left(Q+ {\Lambda}^*\right)x^{\ast}+ 2\textrm{Re}\left((c+{g}^{\ast})^{\dag}x^{\ast}\right)-{\tau}^{\ast}=0.
\end{equation}
Therefore, both of the complementarity conditions \eqref{complement1} and \eqref{complement2} hold.
From the above analysis, we can conclude that $\left(x^{\ast},\,X^{\ast}\right)$ is an optimal solution of (CSDP2), completing
the proof that the condition is sufficient.

{{Finally, if $Q+{\bar{\Lambda}}\succ 0,$ then it follows from \cite[Theorem 10]{uniqueness} that $\left(x^{\ast},\,X^{\ast}\right)$ is the unique optimal solution of (CSDP2).}} \end{proof}

{{
Note that if $\bar{\lambda}_i >-\lambda_{\min}$ for all $i=1,2,\ldots,n$, then $Q+\textrm{Diag}(\bar{\lambda})\succ 0.$
Hence, it follows from Theorem \ref{thmgeneral} that we have the following corollary.

\begin{corollary}\label{corollary>}
Suppose that $M\geq 3.$ Let $s_i$ and $t_i$ be defined as in Theorem \ref{thmgeneral}.
If there exist \begin{equation}\bar{\lambda}_i >-\lambda_{\min},~\bar{\mu}_{i,s_i}\geq 0,~\text{and}~\bar{\mu}_{i,t_i}\geq 0,~i=1,2,\ldots,n\end{equation}
such that $H$ and $v$ in \eqref{rHv} satisfy \eqref{Hv}, then {{$\left(x^{\ast},\,X^{\ast}\right)$}} is the unique solution to (CSDP2).
\end{corollary}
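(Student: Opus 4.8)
The plan is to obtain the corollary as an immediate consequence of the \emph{furthermore} (uniqueness) part of Theorem \ref{thmgeneral}. Under that theorem, the existence of $\bar\lambda_i\in\mathbb{R}$, $\bar\mu_{i,s_i}\geq 0$, $\bar\mu_{i,t_i}\geq 0$ satisfying \eqref{Hv} together with the semidefinite condition \eqref{Qlambdabar} already yields optimality of $\left(x^{\ast},\,X^{\ast}\right)$ for (CSDP2), and strict definiteness $Q+\textrm{Diag}(\bar\lambda)\succ 0$ upgrades this to uniqueness. Since the corollary supplies exactly the multipliers $\bar\lambda_i,\bar\mu_{i,s_i},\bar\mu_{i,t_i}$ and assumes \eqref{Hv} outright, the only substantive task is to verify that the strengthened hypothesis $\bar\lambda_i>-\lambda_{\min}$ forces the perturbed matrix to be positive definite rather than merely positive semidefinite.

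First I would record the diagonal-perturbation eigenvalue estimate. Because $\lambda_{\min}=\lambda_{\min}(Q)$ by our standing convention, we have $Q\succeq\lambda_{\min} I_n$, i.e.\ $Q-\lambda_{\min}I_n\succeq 0$. As $\textrm{Diag}(\bar\lambda)$ is diagonal, its eigenvalues are precisely the entries $\bar\lambda_i$, so the hypothesis $\bar\lambda_i>-\lambda_{\min}$ for every $i$ is equivalent to $\textrm{Diag}(\bar\lambda)+\lambda_{\min}I_n\succ 0$. Writing $Q+\textrm{Diag}(\bar\lambda)=\left(Q-\lambda_{\min}I_n\right)+\left(\textrm{Diag}(\bar\lambda)+\lambda_{\min}I_n\right)$ then exhibits it as the sum of a positive semidefinite matrix and a positive definite matrix, which is positive definite. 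In particular, $Q+\textrm{Diag}(\bar\lambda)\succ 0$, so condition \eqref{Qlambdabar} holds in its strict form.

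Next I would confirm that all hypotheses of Theorem \ref{thmgeneral} are in force and conclude. The reals $\bar\lambda_i$ and the nonnegative scalars $\bar\mu_{i,s_i},\bar\mu_{i,t_i}$ furnished by the corollary meet the existence requirements \eqref{existence}; equation \eqref{Hv} is assumed; and the positive definiteness established above implies \eqref{Qlambdabar}. Applying the sufficiency direction of Theorem \ref{thmgeneral} shows $\left(x^{\ast},\,X^{\ast}\right)$ is optimal for (CSDP2), and the strict inequality $Q+\textrm{Diag}(\bar\lambda)\succ 0$ lets me invoke the final sentence of that theorem to conclude that the optimum is attained uniquely at $\left(x^{\ast},\,X^{\ast}\right)$. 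There is essentially no hard step here: the corollary merely repackages Theorem \ref{thmgeneral} under a cleaner, directly checkable hypothesis. The only point deserving care is the eigenvalue estimate above --- specifically, remembering that $\lambda_{\min}$ denotes $\lambda_{\min}(Q)$, and that it is the \emph{strict} bound $\bar\lambda_i>-\lambda_{\min}$ (as opposed to $\bar\lambda_i\geq-\lambda_{\min}$) that promotes the semidefinite condition to strict definiteness and thereby delivers uniqueness.
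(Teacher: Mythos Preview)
Your proposal is correct and follows essentially the same approach as the paper: the paper simply observes that $\bar\lambda_i>-\lambda_{\min}$ for all $i$ implies $Q+\textrm{Diag}(\bar\lambda)\succ 0$ and then invokes Theorem~\ref{thmgeneral}, which is exactly your argument with the eigenvalue estimate spelled out in slightly more detail.
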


As a direct consequence of Theorem \ref{thmgeneral} and Corollary 4.3,}} we have the next result. 
\begin{theorem}\label{thmM3exact}
Suppose that $M\geq 3.$ If the inputs $H$ and $v$ in \eqref{rHv} satisfy \eqref{condition}, then both (CSDP2) and (ERSDP) are tight for (P).
\end{theorem}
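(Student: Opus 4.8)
The plan is to funnel the whole statement through Corollary~\ref{corollary>}. By that corollary, it suffices to prove that condition~\eqref{condition} guarantees, for every $i$, the existence of multipliers $\bar\lambda_i>-\lambda_{\min}$ and $\bar\mu_{i,s_i},\,\bar\mu_{i,t_i}\geq 0$ satisfying the per-coordinate decomposition~\eqref{Hv}. Granting this, Corollary~\ref{corollary>} shows that $\left(x^{\ast},\,X^{\ast}\right)$ is the unique optimal solution of (CSDP2); since $X^{\ast}=x^{\ast}(x^{\ast})^{\dag}$ is rank one and $x^{\ast}$ (the true transmitted signal of~\eqref{xisi}) is feasible for (P), the optimal value of (CSDP2) equals the objective of (P) at $x^{\ast}$, so (CSDP2) is tight for (P). Finally, because (ERSDP) is tighter than (CSDP2) by Theorem~\ref{thmtighter}, tightness of (CSDP2) forces tightness of (ERSDP), exactly as noted right after Theorem~\ref{thmtighter}. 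Thus the entire theorem reduces to a single per-index feasibility claim about~\eqref{Hv}.

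To prove that claim I would fix $i$ and exploit the geometry of the three unit vectors in~\eqref{Hv}. Writing $x_i^{\ast}=e^{\textbf{i}\beta_i}$ with $\beta_i=2\pi s_i/M$, the definitions~\eqref{atheta} and~\eqref{ti} place $a_{s_i}$ and $a_{t_i}$ symmetrically about $x_i^{\ast}$, at arguments $\beta_i-\pi/M$ and $\beta_i+\pi/M$ respectively (the wrap-around case $s_i=M-1$, for which $t_i=0$, being checked modulo $2\pi$). Multiplying~\eqref{Hv} by $e^{-\textbf{i}\beta_i}$ and setting $\tilde w_i=e^{-\textbf{i}\beta_i}[H^{\dag}v]_i$ (so $|\tilde w_i|=|[H^{\dag}v]_i|$), $p=\bar\mu_{i,s_i}/2$, $q=\bar\mu_{i,t_i}/2$, and $\alpha=\pi/M$, the single complex equation splits into the two real equations
\begin{align*}
\textrm{Re}(\tilde w_i) &= \bar\lambda_i+(p+q)\cos\alpha,\\
\textrm{Im}(\tilde w_i) &= (q-p)\sin\alpha.
\end{align*}
These are two equations in the three unknowns $\bar\lambda_i,p,q$, leaving one degree of freedom, which I would fix by the nonnegative choice $p=\max\{0,-\delta\}$, $q=\max\{0,\delta\}$ with $\delta=\textrm{Im}(\tilde w_i)/\sin\alpha$; this automatically gives $q-p=\delta$, $p,q\geq 0$, and hence $\bar\lambda_i=\textrm{Re}(\tilde w_i)-|\textrm{Im}(\tilde w_i)|\cot\alpha$.

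It then remains to check $\bar\lambda_i>-\lambda_{\min}$, i.e., $|\textrm{Im}(\tilde w_i)|\cot\alpha-\textrm{Re}(\tilde w_i)<\lambda_{\min}$. Writing $\tilde w_i=\rho_i e^{\textbf{i}\phi_i}$ with $\rho_i=|[H^{\dag}v]_i|$ and $\phi_i\in[-\pi,\pi]$, the left-hand side equals $\frac{\rho_i}{\sin\alpha}\bigl(|\sin\phi_i|\cos\alpha-\cos\phi_i\sin\alpha\bigr)$, and the trigonometric collapse $|\sin\phi_i|\cos\alpha-\cos\phi_i\sin\alpha=\sin(|\phi_i|-\alpha)\leq 1$ bounds it by $\rho_i/\sin\alpha=|[H^{\dag}v]_i|/\sin(\pi/M)$. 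Condition~\eqref{condition} gives $|[H^{\dag}v]_i|\leq\|H^{\dag}v\|_{\infty}<\lambda_{\min}\sin(\pi/M)$, hence $|[H^{\dag}v]_i|/\sin(\pi/M)<\lambda_{\min}$, which yields $\bar\lambda_i>-\lambda_{\min}$ for every $i$ and completes the feasibility claim.

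I expect the crux to be the geometric bookkeeping rather than any deep analytic estimate: correctly locating $a_{s_i}$ and $a_{t_i}$ at arguments $\beta_i\mp\pi/M$ around $x_i^{\ast}$ (including the modular wrap-around at $s_i=M-1$), and recognizing that the unknown phase $\phi_i$ of the noise term $[H^{\dag}v]_i$ is absorbed into the clean worst-case bound $1/\sin(\pi/M)$ through the identity $|\sin\phi_i|\cos\alpha-\cos\phi_i\sin\alpha=\sin(|\phi_i|-\alpha)$. This is exactly where the factor $\sin(\pi/M)$ in~\eqref{condition} comes from, and where $M\geq 3$ enters: it is under $M\geq 3$ that the admissible set $\mathcal{F}$ used to build (CSDP2) is the polyhedron cut out by the $M$ facets $\textrm{Re}(a_j^{\dag}w)\leq\cos(\pi/M)$, of which precisely the two indexed by $s_i$ and $t_i$ are active at each vertex $x_i^{\ast}$.
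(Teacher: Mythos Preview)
Your proof is correct and follows essentially the same approach as the paper: both reduce to Corollary~\ref{corollary>}, rotate by $e^{-\textbf{i}\beta_i}$ so that $x_i^{\ast}\mapsto 1$ and $a_{s_i},a_{t_i}\mapsto e^{\mp\textbf{i}\pi/M}$, and then show that condition~\eqref{condition} places $[H^{\dag}v]_i$ strictly inside the admissible cone, with the factor $\sin(\pi/M)$ arising as the distance from the origin to the cone's boundary. The only cosmetic difference is that the paper argues this last step geometrically (the set $\hat{\mathcal{S}}_i$ is a polyhedral cone with apex $-\lambda_{\min}$ and extreme directions $e^{\pm\textbf{i}\pi/M}$, whose nearest boundary point to the origin is at distance $\lambda_{\min}\sin(\pi/M)$), whereas you construct the multipliers $p,q,\bar\lambda_i$ explicitly and bound $\bar\lambda_i$ via the identity $|\sin\phi_i|\cos\alpha-\cos\phi_i\sin\alpha=\sin(|\phi_i|-\alpha)\leq 1$; these are two presentations of the same computation.
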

\begin{proof} Consider the set $$\mathcal{S}_i=\left\{ \lambda_i x_i^{\ast}+ \frac{\mu_{i,s_i}}{2} a_{s_i} + \frac{\mu_{i,t_i}}{2} a_{t_i} \in \mathbb{C}\mid\lambda_i\geq -\lambda_{\min},\,\mu_{i,s_i}\geq 0,\,\mu_{i,t_i}\geq 0 \right\}.$$
By \eqref{atheta} and \eqref{xisi}, we have $a_{s_i}=x_i^{\ast} e^{-\pi \textbf{i}/M}$ and $a_{t_i}=x_i^{\ast} e^{\pi \textbf{i}/M}$.
Hence, $\mathcal{S}_i$ can be rewritten as $\mathcal{S}_i = x_i^{\ast} \hat{\mathcal{S}}_i,$ where
$$\hat{\mathcal{S}}_i=\left\{\lambda_i + \frac{\mu_{i,s_i}}{2} e^{-\pi \textbf{i}/M} + \frac{\mu_{i,t_i}}{2} e^{\pi \textbf{i}/M} \in \mathbb{C} \mid\lambda_i\geq -\lambda_{\min},\,\mu_{i,s_i}\geq 0,\,\mu_{i,t_i}\geq 0 \right\}.$$ Note that $\hat{\mathcal{S}}_i$ is a polyhedron in the complex domain with the extreme point being $-\lambda_{\min}$ and two extreme directions being $e^{-\pi \textbf{i}/M}$ and $e^{\pi \textbf{i}/M},$ that is, a polyhedron in the $2$-dimensional real domain with the extreme point being $(-\lambda_{\min},\,0)$ and two extreme directions being $\left(\cos(\pi/M),\,-\sin(\pi/M)\right)$ and $\left(\cos(\pi/M),\,\sin(\pi/M)\right).$
One can easily verify that: 1) the origin lies in the set $\mathcal{S}_i,$ and 2) the smallest distance from the origin to the boundary of the set $\mathcal{S}_i$ is $\lambda_{\min}\sin\left(\frac{\pi}{M}\right).$ Therefore, if $H$ and $v$ in \eqref{rHv} satisfy
the condition in \eqref{condition}, then $\left[H^{\dagger}v\right]_i$ lies in the interior of $\mathcal{S}_i$ for all $i=1,2,\ldots,n,$ which further implies that
the conditions in \eqref{Hv} are satisfied. Invoking Theorem \ref{thmgeneral} and Corollary \ref{corollary>}, we conclude that (CSDP2) is tight for (P) if \eqref{condition} is satisfied. Since (ERSDP) is even tighter than (CSDP2) (cf. Theorem \ref{thmtighter}), it follows
that (ERSDP) is also tight for (P) if \eqref{condition} is satisfied. The proof is completed.
\end{proof}


Two remarks on Theorem \ref{thmM3exact} are in order.

First, Theorem \ref{thmM3exact} shows that both (CSDP2) and (ERSDP) are tight for the MIMO detection problem (P) under condition \eqref{condition}
and thus answers the open question posed by So in \cite{So2010}. Moreover, our result in Theorem \ref{thmM3exact} is for arbitrary  $M\geq 3,$ in contrast to all previous results in \cite{Jalden2006,Jalden,Kisialiou2005,Kisialiou,So2010}, which focus on the case $M=2.$

{
{
Second, Theorem 4.4 states that the true vector of transmitted signals $x^{\ast}$ can be recovered by solving appropriate SDPs if condition \eqref{condition} is satisfied.
It is worth mentioning a recent related result in \cite[Theorem 1]{Liu2017} for the general $M$-PSK modulation case,
which shows that $x^{\ast}$ can be recovered by using the generalized power method (GPM)
if the following conditions are satisfied:
\begin{equation}\label{scGPM}
\left\|\frac{2\alpha_k}{m}H^{\dag}v\right\|_{\infty}<\frac{\sin(\pi/M)}{2},~\left\|I_n-\frac{2\alpha_k}{m}H^{\dag}H\right\|_{\textrm{op}}<\frac{1}{4},
\end{equation}
where $\left\{\alpha_k\right\}$ are the step sizes used in the GPM and $\|\cdot\|_{\textrm{op}}$ is the matrix operator norm. One can show that the conditions in \eqref{scGPM}
imply 
$$\lambda_{\min}(H^{\dag}H)\sin\left(\frac{\pi}{M}\right)>\frac{3}{2}\|H^{\dag}v\|_{\infty},$$ which is more restrictive than condition \eqref{condition}. Another difference between conditions \eqref{scGPM} in \cite{Liu2017} and our condition \eqref{condition} is that conditions in \eqref{scGPM} depend on both the algorithm parameters (i.e., $\left\{\alpha_k\right\}$) and the problem parameters (i.e., $H,~v,$ and $M$) while our condition \eqref{condition} only depends on the problem parameters (and is independent of the algorithm parameters).
}}

{
In practice, $H$ and $v$ generally follow the complex Gaussian distribution.
The following theorem states how likely condition \eqref{condition} will be satisfied under the assumption that the real and imaginary parts of
the entries of $H$ and $v$ are i.i.d. real Gaussian variables. The proof of the theorem is relegated to Appendix \ref{thmprob}.
\begin{theorem}\label{thmprob}
Consider the MIMO channel model (1.1), where the entries of $\textrm{Re}(H)$ and $\textrm{Im}(H)$ are i.i.d. standard real Gaussian random variables
and the entries of $\textrm{Re}(v)$ and $\textrm{Im}(v)$ are i.i.d. real Gaussian variables with zero mean and variance $\sigma^2$.
Suppose that $m>n$ and \begin{equation}\label{sigmabound}\sigma \leq \frac{(1-\rho)^2\sin(\pi/M)}{4\sqrt{2}},\end{equation} where $\rho=\sqrt{n/m}$. Then the probability that condition \eqref{condition} is satisfied is at least$$ 1-e^{- \frac{m(1-\rho)^2}{4}}- 2\sqrt{2/\pi} \cdot n\cdot e^{-m/2}-8e^{-m/8}.$$
\end{theorem}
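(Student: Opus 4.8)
The plan is to establish condition \eqref{condition} with the stated probability by controlling its two sides separately: a high-probability lower bound on $\lambda_{\min}=\lambda_{\min}(H^{\dag}H)$ and a high-probability upper bound on $\left\|H^{\dag}v\right\|_{\infty}$. On the intersection of the two favorable events, the hypothesis \eqref{sigmabound} on $\sigma$ will be exactly what is needed to force the left-hand side of \eqref{condition} strictly above the right-hand side, and a union bound over the (three) complementary failure events will produce the claimed probability. I would split the failure probability into one term coming from the $\lambda_{\min}$ estimate and two terms coming from the $\left\|H^{\dag}v\right\|_{\infty}$ estimate.

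For the lower bound on $\lambda_{\min}$ I would work with $\sigma_{\min}(H)$, since $\lambda_{\min}=\sigma_{\min}(H)^2$. Viewing $\left(\textrm{Re}(H),\textrm{Im}(H)\right)$ as a standard Gaussian vector in $\mathbb{R}^{2mn}$, the two key observations are: (i) because $\|H\|_F^2=\|\textrm{Re}(H)\|_F^2+\|\textrm{Im}(H)\|_F^2$ and $\left|\sigma_{\min}(H)-\sigma_{\min}(H')\right|\le\|H-H'\|_{\textrm{op}}\le\|H-H'\|_F$, the map $\left(\textrm{Re}(H),\textrm{Im}(H)\right)\mapsto\sigma_{\min}(H)$ is $1$-Lipschitz; and (ii) for every fixed unit $x\in\mathbb{C}^n$ the real and imaginary parts of each entry of $Hx$ are independent standard real Gaussians, so $\|Hx\|_2^2$ is chi-squared with $2m$ degrees of freedom. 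From (ii), Gordon's Gaussian comparison inequality (applied to $H$, equivalently to its $2m\times 2n$ real representation) gives $\mathbb{E}\left[\sigma_{\min}(H)\right]\ge\sqrt{2m}-\sqrt{2n}=\sqrt{2m}\,(1-\rho)$. Feeding this into the Gaussian concentration inequality for the $1$-Lipschitz map in (i) with deviation $s=\sqrt{m/2}\,(1-\rho)$ yields $\sigma_{\min}(H)>\sqrt{m/2}\,(1-\rho)$, i.e. $\lambda_{\min}>\tfrac12 m(1-\rho)^2$, off an event of probability at most $e^{-s^2/2}=e^{-m(1-\rho)^2/4}$, which is the first term.

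For the upper bound on $\left\|H^{\dag}v\right\|_{\infty}$ I would condition on $v$. Conditioned on $v$, each entry $[H^{\dag}v]_i=\sum_k\overline{H}_{ki}v_k$ is a mean-zero complex Gaussian whose real and imaginary parts are independent $N(0,\|v\|_2^2)$, and the $n$ entries are mutually independent since they involve disjoint columns of $H$. A union bound over the $2n$ real and imaginary components, each obeying the Gaussian tail bound $\Pr\!\left[\,|N(0,\|v\|_2^2)|>\sqrt{m}\,\|v\|_2\,\right]\le\sqrt{2/\pi}\,e^{-m/2}$, shows that with probability at least $1-2\sqrt{2/\pi}\,n\,e^{-m/2}$ every component is at most $\sqrt{m}\,\|v\|_2$, whence $\left\|H^{\dag}v\right\|_{\infty}\le\sqrt{2m}\,\|v\|_2$; this is the second term. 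Separately, $\|v\|_2^2/\sigma^2$ is chi-squared with $2m$ degrees of freedom, so a standard chi-squared tail bound gives $\|v\|_2\le 2\sqrt{m}\,\sigma$ off an event of probability at most $8e^{-m/8}$, the third term. On the intersection, $\left\|H^{\dag}v\right\|_{\infty}\le 2\sqrt{2}\,m\,\sigma$.

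Finally I would combine the estimates. On the intersection of all three favorable events, \eqref{sigmabound} gives
\[
\left\|H^{\dag}v\right\|_{\infty}\le 2\sqrt{2}\,m\,\sigma\le \tfrac12 m(1-\rho)^2\sin\!\left(\tfrac{\pi}{M}\right)<\lambda_{\min}\sin\!\left(\tfrac{\pi}{M}\right),
\]
which is precisely condition \eqref{condition}; a union bound over the three complementary events then yields the stated probability. The main obstacle is the sharp lower bound on $\mathbb{E}\left[\sigma_{\min}(H)\right]$ with the correct constant $\sqrt{2m}\,(1-\rho)$: the complex Gaussian structure (entrywise variance $2$, together with the fact that the natural real representation of $H$ does not have independent entries) must be handled carefully so that the Lipschitz/concentration step delivers exactly the exponent $m(1-\rho)^2/4$ rather than a lossier rate. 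Once that constant is secured, matching the remaining constants so that \eqref{sigmabound} closes the comparison between $\tfrac12 m(1-\rho)^2\sin(\pi/M)$ and $2\sqrt{2}\,m\,\sigma$ is routine.
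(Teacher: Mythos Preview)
Your proposal is correct and follows essentially the same approach as the paper: the paper likewise controls $\lambda_{\min}(H^\dag H)$ and $\|H^\dag v\|_\infty$ separately (its Lemma~D.1 and Lemma~D.2), obtains precisely the thresholds $\lambda_{\min}>m(1-\rho)^2/2$ and $\|H^\dag v\|_\infty\le 2\sqrt{2}\,m\sigma$ with the same three failure probabilities, and then combines them via the union bound exactly as you describe. The only cosmetic differences are that the paper cites the singular-value concentration \eqref{probability1} as a known result from Davidson--Szarek/Foucart--Rauhut rather than deriving it via Gordon plus Lipschitz concentration, and it obtains the $\|H^\dag v\|_\infty$ bound by applying So's Proposition~3.6 to the two real blocks $\hat H_1^{\T}\hat v$ and $\hat H_2^{\T}\hat v$ separately (which is why the constants $4e^{-m/8}$ and $\sqrt{2/\pi}\,n\,e^{-m/2}$ each get doubled), whereas your direct conditioning argument on the complex entries is slightly cleaner and would in fact recover the same or marginally better constants.
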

}
\section{Numerical experiments}

In this section, we present some {{numerical results}} to illustrate the tightness of our proposed enhanced (ERSDP) and the
conventional (CSDP) for problem (P). We generate the instances of problem (P) as follows: we generate each entry of the channel matrix
$H\in \mathbb{C}^{m\times n}$ according to the complex standard Gaussian distribution
(with zero mean and unit variance); for each $i=1,2,\ldots,n,$ we uniformly choose $s_i$ from $\{0,1,\ldots.,M-1\}$ and
set each entry of the vector $x^{\ast}\in \mathbb{C}^{n}$ to be $x_i^{\ast}=e^{2s_{i}\pi \textbf{i}/M};$
we generate each entry of the noise vector $v\in \mathbb{C}^{m}$ according to the complex Gaussian distribution
with zero mean and with variance $\sigma^2;$ and finally we compute $r$ as in \eqref{rHv}. In our numerical experiments, we set $m=15$ and $n=10.$
In practical digital communications, $M$ generally is $2,\,4,\,8,\ldots$ In our experiments, to study the tightness of (ERSDP)
on various different settings, we consider all cases where $M\in \{3,4,6,8\}.$ 

To compare the numerical performance of (ERSDP) and (CSDP), we generate a total of $40$ instances with $M=3$ and $\sigma^2\in \{0.01,\,0.1,\,1.0,\,10\}.$
Numerical results are summarized in Table 1, where ``LBC'' and ``LBE'' denote the lower bounds (i.e., the optimal objective values) returned by two SDP relaxations (CSDP) and (ERSDP), respectively; ``UB'' denotes the upper bound (i.e., the objective value at the feasible point) returned by the approximation algorithms in \cite{So2008} and \cite{Zhang}; ``GapC'' (= UB$-$LBC) and ``GapE'' (= UB$-$LBE) denote the gaps
between the upper bound and the corresponding lower bounds; ``ClosedGap'' denotes $(\textrm{LBE}-\textrm{LBC})/(\textrm{UB}-\textrm{LBC})$, which measures how much of the gap in (CSDP) is closed by (ERSDP); {{and finally the ``Y/N'' pair denotes whether (ERSDP) is tight or not and whether condition (1.5) is satisfied or not and
``Y'' and ``N'' denote ``Yes'' and ``No'', respectively.}} Obviously, the larger the value of ClosedGap is, the larger portion of the gap in (CSDP) is closed by (ERSDP) and the better the performance of (ERSDP) (compared to (CSDP)).



%

\begin{table}[!htbp]
\caption{Numerical results of (ERSDP) and (CSDP) on $40$ randomly generated instances of problem (P) with different levels of noise and $M=3.$}
\centering
\begin{tabular}{|c|c|c|c|c|c|c|c|c|}
\hline
ID &$\sigma^2$ &LBC &LBE &UB &GapC & GapE & ClosedGap &Y/N \\
\hline
1 &0.010 &-506.995 &-506.883 &-506.883 &0.111 &0.000 &100.0\%  &(Y,\,N)  \\
2 &0.010 &-332.651 &-332.539 &-332.539 &0.112 &0.000 &100.0\%  &(Y,\,Y)  \\
3 &0.010 &-253.831 &-253.568 &-253.568 &0.263 &0.000 &100.0\%  &(Y,\,Y)  \\
4 &0.010 &-194.345 &-194.280 &-194.280 &0.065 &0.000 &100.0\%  &(Y,\,Y)  \\
5 &0.010 &-217.328 &-217.190 &-217.190 &0.138 &0.000 &100.0\%  &(Y,\,Y)  \\
6 &0.010 &-344.753 &-344.581 &-344.581 &0.173 &0.000 &100.0\%  &(Y,\,Y)  \\
7 &0.010 &-212.575 &-212.430 &-212.430 &0.145 &0.000 &100.0\%  &(Y,\,Y)  \\
8 &0.010 &-159.017 &-158.900 &-158.900 &0.116 &0.000 &100.0\%  &(Y,\,Y)  \\
9 &0.010 &-206.139 &-206.054 &-206.054 &0.084 &0.000 &100.0\%  &(Y,\,Y)  \\
10 &0.010 &-228.058 &-227.955 &-227.955 &0.103 &0.000 &100.0\%  &(Y,\,Y)  \\
11 &0.100 &-133.312 &-132.217 &-132.217 &1.095 &0.000 &100.0\%  &(Y,\,N)  \\
12 &0.100 &-398.492 &-397.605 &-397.605 &0.887 &0.000 &100.0\%  &(Y,\,N)  \\
13 &0.100 &-260.164 &-259.290 &-259.290 &0.875 &0.000 &100.0\%  &(Y,\,N)  \\
14 &0.100 &-296.578 &-295.055 &-295.055 &1.523 &0.000 &100.0\%  &(Y,\,N)  \\
15 &0.100 &-202.681 &-201.966 &-201.966 &0.715 &0.000 &100.0\%  &(Y,\,N)  \\
16 &0.100 &-355.024 &-354.248 &-354.248 &0.775 &0.000 &100.0\%  &(Y,\,N)  \\
17 &0.100 &-454.615 &-453.808 &-453.808 &0.807 &0.000 &100.0\%  &(Y,\,N)  \\
18 &0.100 &-302.445 &-301.429 &-301.429 &1.016 &0.000 &100.0\%  &(Y,\,N)  \\
19 &0.100 &-392.029 &-390.804 &-390.804 &1.225 &0.000 &100.0\%  &(Y,\,N)  \\
20 &0.100 &-248.856 &-247.446 &-247.446 &1.411 &0.000 &100.0\%  &(Y,\,N)  \\
21 &1.000 &-183.786 &-171.408 &-170.839 &12.947 &0.569 &95.6\%  &(N,\,N)  \\
22 &1.000 &-388.290 &-374.763 &-370.751 &17.539 &4.012 &77.1\%  &(N,\,N)  \\
23 &1.000 &-309.820 &-296.907 &-296.907 &12.913 &0.000 &100.0\%  &(Y,\,N)  \\
24 &1.000 &-181.890 &-157.968 &-151.285 &30.605 &6.683 &78.2\%  &(N,\,N)  \\
25 &1.000 &-235.490 &-219.367 &-217.811 &17.679 &1.557 &91.2\%  &(N,\,N)  \\
26 &1.000 &-263.610 &-249.803 &-249.803 &13.807 &0.000 &100.0\%  &(Y,\,N)  \\
27 &1.000 &-373.140 &-361.259 &-361.259 &11.882 &0.000 &100.0\%  &(Y,\,N)  \\
28 &1.000 &-214.068 &-201.188 &-201.188 &12.879 &0.000 &100.0\%  &(Y,\,N)  \\
29 &1.000 &-404.825 &-398.641 &-398.564 &6.260 &0.077 &98.8\%  &(N,\,N)  \\
30 &1.000 &-343.993 &-339.752 &-339.752 &4.241 &0.000 &100.0\%  &(Y,\,N)  \\
31 &10.000 &-390.985 &-300.162 &-282.554 &108.431 &17.608 &83.8\%  &(N,\,N)  \\
32 &10.000 &-298.119 &-221.992 &-205.475 &92.644 &16.517 &82.2\%  &(N,\,N)  \\
33 &10.000 &-386.464 &-329.595 &-328.439 &58.025 &1.156 &98.0\%  &(N,\,N)  \\
34 &10.000 &-356.127 &-314.399 &-304.091 &52.036 &10.308 &80.2\%  &(N,\,N)  \\
35 &10.000 &-329.525 &-243.219 &-212.705 &116.820 &30.514 &73.9\%  &(N,\,N)  \\
36 &10.000 &-324.014 &-284.622 &-269.178 &54.836 &15.444 &71.8\%  &(N,\,N)  \\
37 &10.000 &-548.569 &-477.101 &-468.082 &80.487 &9.019 &88.8\%  &(N,\,N)  \\
38 &10.000 &-787.389 &-737.455 &-737.359 &50.029 &0.095 &99.8\%  &(N,\,N)  \\
39 &10.000 &-366.880 &-307.478 &-305.544 &61.336 &1.935 &96.8\%  &(N,\,N)  \\
40 &10.000 &-377.069 &-304.619 &-280.803 &96.266 &23.816 &75.3\%  &(N,\,N)  \\
\hline
\end{tabular}
\end{table}

We can observe from Table 1 that (ERSDP) is generally tight for problem (P) when the noise level $\sigma^2$ is low, i.e., when the SNR is high. More specifically, (ERSDP) is tight for {all 20 instances with $\sigma^2\leq 0.1$ and for $5$ instances with $\sigma^2=1$}. In sharp contrast, (CSDP) is not tight for all
instances. Furthermore, we can see from Table 1 that LBE is much larger than LBC and GapC is much larger than GapE for instances with high levels of noise,
which show that (ERSDP) is much tighter than (CSDP) for instances with high levels of noise. For instance, when $\sigma^2 =10$, GapE is generally
much smaller than GapC and about {$70\%$ to $99.8\%$} of the gap in (CSDP) is closed by (ERSDP).

{{
To study the  empirical probability of (ERSDP) being tight (as a function of $\sigma^{2}$),
we generate more problem instances where $M=3$ and $\sigma^2\in [0.001,10].$ For each setting, we randomly
generate $1000$ instances. Figure 2 illustrates the empirical probability of (ERSDP) being
tight and the empirical probability of condition (1.5) being satisfied. We can conclude from Fig. 2 that: 1) for the case where $m=15,~n=10,$ and $M=3$, the probability of (ERSDP)
being tight is very close to one when $\sigma^2\leq 0.1$; 2) since condition (1.5) is a sufficient condition for (ERSDP) to
be tight, the probability of condition (1.5) being satisfied is lower than the probability of (ERSDP) being
tight.
}}

\begin{figure}[!t]
\centering
\includegraphics[width=9.0cm]{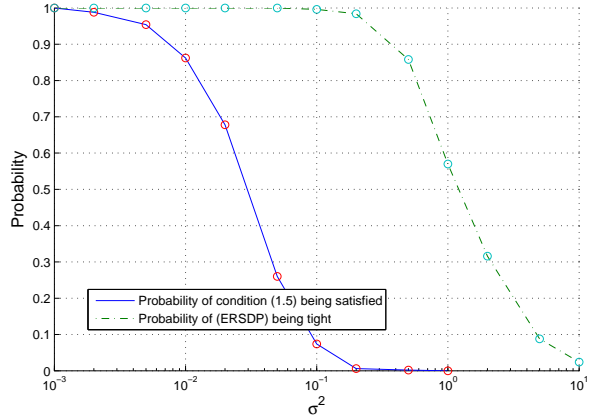}
\caption{ {{Empirical probability of (ERSDP) being
tight and condition (1.5) being satisfied.}} }
\end{figure}


To gain more insight towards the numerical performance of (ERSDP), we carry out numerical experiments on more problem instances with different $M\in\{4,\,6,\,8\}$ and different $\sigma^2\in\{0.01,\,0.1,\,1.0,\,10\}$.
More specifically, we test the performance of (ERSDP) on $12$ different settings and for each setting
we randomly generate $100$ instances. 
Table 2 summarizes the average results (over the $100$ problem instances) and the statistical results (computed based on the $100$ problem instances), where ``TimeC'' and ``TimeE''
denote the average CPU time for solving (CSDP) and (ERSDP), respectively; ``ProbC'' and ``ProbE'' denote the probability that (CSDP) and (ERSDP) are tight, respectively.

\begin{table}[t!]
\caption{Numerical results of (ERSDP) and (CSDP) on randomly generated instances of problem (P) with different $M$ and different levels of noise.}
\centering
\begin{tabular}{|c|c|c|c|c|c|c|c|c|}
\hline
$M$ &$\sigma^2$ &GapC & GapE &ClosedGap &TimeC &TimeE &ProbC &ProbE \\
\hline
4 &$0.01$ &0.096 &0.000  &100.0\% &0.05 &0.08  &0\%  &100\%  \\
6 &$0.01$ &0.106 &0.000  &100.0\% &0.05 &0.08  &0\%  &100\%  \\
8 &$0.01$ &0.100 &0.000  &99.9\% &0.05 &0.09  &0\%  &99\%    \\
4 &$0.1$ &0.971 &0.000  &100.0\% &0.05 &0.08  &0\%  &100\%   \\
6 &$0.1$ &0.871 &0.015  &98.9\% &0.05 &0.08  &0\%  &80\%     \\
8 &$0.1$ &0.979 &0.056  &96.2\% &0.05 &0.09  &0\%  &59\%     \\
4 &$1.0$ &9.399 &0.934  &91.0\% &0.05 &0.07  &0\%  &32\%     \\
6 &$1.0$ &9.423 &2.646  &75.7\% &0.05 &0.07  &0\%  &7\%     \\
8 &$1.0$ &9.162 &3.739  &60.8\% &0.05 &0.07  &0\%  &1\%     \\
4 &$10$ &54.251 &11.328  &78.6\% &0.05 &0.07  &0\%  &0\%    \\
6 &$10$ &26.827 &8.085  &68.2\% &0.05 &0.07  &0\%  &0\%     \\
8 &$10$ &14.803 &4.617  &68.0\% &0.05 &0.07  &0\%  &0\%     \\
\hline
\end{tabular}
\end{table}


We can conclude from Table 2 that: 1) the probability that (ERSDP) is tight for (P) is very high if the
noise level is low; 2) if the noise level is high, (ERSDP) is much tighter than (CSDP) and (ERSDP) narrows down over $60\%$ of the gap due to (CSDP); and 3) the CPU time of solving (ERSDP) is not much longer
than the one of solving (CSDP). 
Finally, (ERSDP) performs better in terms of the tightness on instances with smaller $M$ (if the noise level is fixed).
This is consistent with the sufficient condition in \eqref{condition}, which shows that a lower level of the noise is required to guarantee the
tightness of (ERSDP) for problem (P) with a larger $M.$ 


%
%

\section{Conclusion}
In this paper, we considered the MIMO detection problem (P), an important class of quadratic programming problems with
unit-modulus and discrete argument constraints. We showed that the conventional (CSDP) is generally
not tight for (P). Moreover, we proposed a new enhanced SDR called (ERSDP) and showed that (ERSDP)
is tight for problem (P) under the condition in \eqref{condition}. 
~To the best of our knowledge, our proposed (ERSDP) is the first SDR that is theoretically guaranteed to be tight for general cases of problem (P). 
Our above results answered an open question posed by So in \cite{So2010}.
In addition to enjoying strong theoretical guarantees, our proposed (ERSDP) also performs very well numerically.
Our {numerical} results show that (ERSDP) can return the true vector of transmitted signals with high probability if the level of the noise is low and 
it can narrow down more than $60\%$ of the gap due to (CSDP) on average if the level of the noise is high. {{It is interesting to compare the tightness of some existing SDRs (e.g., in \cite{Fan,Mobasher}) with our proposed (ERSDP) and (CSDP2).
If we could show that some existing SDRs are stronger/tighter than (CSDP2), then it follows from this and Theorems \ref{thmgeneral} and \ref{thmM3exact} that condition \eqref{condition} is also a sufficient condition for them to be tight.}}
We believe that our proposed (ERSDP) and related techniques will find further applications such as in the development of quantized precoding for massive multi-user MIMO communications \cite{jacobsson2017quantized,Sohrabi2018}.

{{\section*{Acknowledgement}
The authors would like to thank Professor Florian Jarre for many useful discussions on the paper.
We also thank the associate editor,
Professor Anthony So, and two anonymous referees for their detailed and valuable
comments and suggestions, which significantly improved
the quality of the paper.}}

%
%
%

\section*{Appendix A: Equivalence between (CSDP) and (RSDP)}

We first show that, for any feasible point $(x,\,X)$ of (CSDP), we can construct a feasible point
$(y,\,Y)$ of (RSDP) that satisfies $\hat{Q}\bullet Y+2 \hat{c}^{\T}y=Q\bullet X+ 2\textrm{Re} (c^{\dag} x).$
Since $$\begin{bmatrix}
1 ~&x^{\dag}\\[2pt]
x ~&X
\end{bmatrix}\succeq 0,$$ we assume, without loss of generality, that it has the following decomposition:
\begin{equation}\label{Xdecom}
\begin{bmatrix}
1 ~&x^{\dag}\\[2pt]
x ~&X
\end{bmatrix}=\sum_{j=1}^{n+1}
\begin{bmatrix}
t_j\\[2pt]
v_j
\end{bmatrix}
\begin{bmatrix}
t_j\\[2pt]
v_j
\end{bmatrix}^{\dag},
\end{equation}
where $t_j\in \mathbb{R}_{+}$ and $v_j\in \mathbb{C}^n$ for all $j=1,2,\ldots,n+1$.
Let
\begin{equation}\label{yY}y= \sum_{j=1}^{n+1} t_j \begin{bmatrix}
\textrm{Re}(v_j)\\[2pt]
\textrm{Im}(v_j)
\end{bmatrix}~\text{and}~Y=\sum_{j=1}^{n+1}
\begin{bmatrix}
\textrm{Re}(v_j)\\[2pt]
\textrm{Im}(v_j)
\end{bmatrix}
\begin{bmatrix}
\textrm{Re}(v_j)\\[2pt]
\textrm{Im}(v_j)
\end{bmatrix}^{\T}.\end{equation} It is simple to see that
$$
\begin{bmatrix}
1 ~&y^{\T}\\[2pt]
y ~&Y
\end{bmatrix}=\sum_{j=1}^{n+1}
\begin{bmatrix}
t_j\\[2pt]
\textrm{Re}(v_j)\\[2pt]
\textrm{Im}(v_j)
\end{bmatrix}
\begin{bmatrix}
t_j\\[2pt]
\textrm{Re}(v_j)\\[2pt]
\textrm{Im}(v_j)
\end{bmatrix}^{\T} \succeq 0.
$$ It follows from \eqref{Xdecom} and \eqref{yY} that, for any $i=1,2,\ldots,n,$ we have
$$Y_{i,i}+Y_{n+i,n+i}=\sum_{j=1}^{n+1}\left(\left[\textrm{Re}(v_j)\right]_{i}^2+\left[\textrm{Im}(v_j)\right]_{i}^2 \right)=\sum_{j=1}^{n+1}\left|\left[v_j\right]_i\right|^2=X_{i,i}=1.$$
Hence, $(y,\,Y)$ in \eqref{yY} is feasible to (RSDP). Moreover, we have, from \eqref{Xdecom} and \eqref{yY}, that
\begin{align*}
\hat{Q}\bullet Y+2 \hat{c}^{\T}y
&=\hat{Q}\bullet \left(\sum_{j=1}^{n+1}\begin{bmatrix}
\textrm{Re}(v_j)\\[2pt]
\textrm{Im}(v_j)
\end{bmatrix}
\begin{bmatrix}
\textrm{Re}(v_j)\\[2pt]
\textrm{Im}(v_j)
\end{bmatrix}^{\T} \right)+ 2\hat{c}^{\T} \left(\sum_{j=1}^{n+1} t_j\begin{bmatrix}
\textrm{Re}(v_j)\\[2pt]
\textrm{Im}(v_j)
\end{bmatrix}\right)\\
&=Q\bullet\left(\sum_{j=1}^{n+1} v_j v_j^{\dag}\right)+2\textrm{Re}\left(c^{\dag} \left(\sum_{j=1}^{n+1} t_j v_j\right)\right)\\
&=Q\bullet X+ 2\textrm{Re} (c^{\dag} x).
\end{align*}
We can conclude from the above that (CSDP) is at least as tight as (RSDP).

Conversely, for any given feasible point $(y,\,Y)$ of (RSDP), we can construct a feasible point $(x,\,X)$ of (CSDP) that gives the same objective value, i.e., $Q\bullet X+ 2\textrm{Re} (c^{\dag} x)=\hat{Q}\bullet Y+2 \hat{c}^{\T}y.$
Suppose that
$$
y=\begin{bmatrix}
a\\[2pt]
b\\
\end{bmatrix}~\textrm{and}~Y=\begin{bmatrix}
A ~&B\\[2pt]
B^{\T} ~&C\\
\end{bmatrix},$$
where $a,\,b\in\mathbb{R}^n$, $A,\,B,\,C\in \mathbb{R}^{n\times n}$, $A=A^{\T}$, and $C=C^{\T}.$ Let
\begin{equation}\label{xX}
x=a+b \textbf{i}~\text{and}~X=(A+C) + (B^{\T}-B)\textbf{i}.
\end{equation} Next, we first show \begin{equation}\label{hatX}\hat X:=\begin{bmatrix}
1 ~&x^{\dag}\\[2pt]
x ~&X
\end{bmatrix}\succeq 0.\end{equation}
Define
$$M_1=\begin{bmatrix}
1 ~&a^{\T} ~&0 ~&b^{\T}\\[2pt]
a ~&A  ~&\zeta ~&B\\[2pt]
0 ~&\zeta^{\T}  ~&0 ~&\zeta^{\T}\\[2pt]
b ~&B^{\T}  ~&\zeta ~&C
\end{bmatrix}~\text{and}~M_2=\begin{bmatrix}
0 &\zeta^{\T} &0 &\zeta^{\T}\\
\zeta &C  &-b &-B^{\T}\\
0 &-b^{\T} &1 &a^{\T}\\
\zeta &-B   &a &A
\end{bmatrix},$$ where $\zeta$ is the all-zero column vector of dimension $n$.
Then, it follows from $$\begin{bmatrix}1 ~&y^{\T}\\[2pt]
y ~&Y\\
\end{bmatrix}\succeq 0,$$ that $$M_1\succeq 0~\text{and}~M_2 =
\begin{bmatrix}
 ~&I_{n+1} \\[3pt]
-I_{n+1} ~&
\end{bmatrix} M_1
\begin{bmatrix}
 ~&-I_{n+1} \\[3pt]
I_{n+1} ~&
\end{bmatrix}
\succeq 0,$$ which further implies
\begin{equation}\label{M}M:=M_1+M_2=\begin{bmatrix}
1 ~&a^{\T} ~&0 ~&b^{\T}\\[2pt]
a ~&A+C  ~&-b ~&B-B^{\T}\\[2pt]
0 ~&-b^{\T}  ~&1 ~&a^{\T}\\[2pt]
b ~&B^{\T}-B  ~&a ~&A+C
\end{bmatrix}\succeq 0.\end{equation}
Recall that $v\in \mathbb{C}^{n+1}$ is an eigenvector of $\hat X$ in \eqref{hatX} if and only if
$$
\begin{bmatrix}
\textrm{Re}(v)\\[2pt]
\textrm{Im}(v)\\
\end{bmatrix}
\textrm{ and }
\begin{bmatrix}
-\textrm{Im}(v)\\[2pt]
\textrm{Re}(v)\\
\end{bmatrix}
$$ are two eigenvectors of $M$ in \eqref{M} corresponding to the same eigenvalue \cite[Page 448]{Goemans2004}. 
Then, we have $\hat X \geq 0.$
The positive semidefiniteness of $\hat X$ further implies that all of its diagonal entries are nonnegative. This, together with \eqref{xX}, immediately shows
that $X_{i,i}=1$ for all $i=1,2,\ldots,n.$ Moreover, it is simple to check that $x$ and $X$ in \eqref{xX} satisfy
$Q\bullet X +2 \textrm{Re}(c^{\dag}x)=\hat{Q} \bullet Y +2\hat{c}^{\T}y$. This shows that (RSDP) is at least as tight as (CSDP).

From the above analysis, we conclude that (CSDP) are (RSDP) are equivalent to each other.

\section*{Appendix B: Proof of Theorem \ref{thmtighter}}

We first show that, for any
feasible point $(y,\,Y)$ of (ERSDP), we can construct a feasible point
$(x,\,X)$ of (CSDP2) that satisfies $Q\bullet X+ 2\textrm{Re} (c^{\dag} x)=\hat{Q}\bullet Y+2 \hat{c}^{\T}y.$
Similar to the proof in Appendix A, 
we can construct the same $x$ and $X$ in \eqref{xX} and show that $\hat X$ in \eqref{hatX} satisfies $\hat X\succeq 0,$
$X_{i,i}=1$ for all $i=1,2,\ldots,n,$ and $Q\bullet X +2 \textrm{Re}(c^{\dag}x)=\hat{Q} \bullet Y +2\hat{c}^{\T}y.$
For each $i=1,2,\ldots,n$, since $\mathcal{Y}(i) \in \textrm{Conv} \{P_0,P_1,\ldots,P_{M-1}\}$ in (ERSDP),
it follows that $$\left(y_i,\,y_{i+n}\right)\in \textrm{Conv}\left(\left\{(\cos\theta,\,\sin\theta)\mid\theta\in \mathcal{A}\right\}\right),$$
which is equivalent to
$$\textrm{Re}(a^{\dag}_j x_i) \leq  \cos\left(\frac{\pi}{M}\right),~j=0,1,\ldots,M-1,$$
where $x_i=y_i+y_{i+n}\textbf{i}$ and $a_j$ is given in \eqref{atheta}. Hence, $(x,\,X)$ is feasible to (CSDP2).
Based on the above analysis, we conclude that (ERSDP) is at least as tight as (CSDP2).

Next, we give an example to illustrate that (ERSDP) is indeed (strictly) tighter than (CSDP2).
Let $m=n=2$ and $M=3$ (and hence the set $\mathcal{A}$ in \eqref{setA} is $\left\{0, 2\pi/3, 4\pi/3\right\}$ in this case);
let
$$x^{\ast}=\left(
        \begin{array}{c}
          \frac{-1-\sqrt{3}\mathbf{i}}{2} \\[3pt]
          1 \\
        \end{array}
      \right),~H=\left(
                   \begin{array}{cc}
                     8-6\mathbf{i} ~& 8+6\mathbf{i} \\[3pt]
                     3+4\mathbf{i} ~& -4-3\mathbf{i} \\
                   \end{array}
                 \right),~\text{and}~v=\left(
                                        \begin{array}{c}
                                          5-6\mathbf{i} \\
                                          4-4\mathbf{i} \\
                                        \end{array}
                                      \right);
$$and finally set $r=Hx^{\ast}+v$ as in \eqref{rHv}. Then, we numerically solve (CSDP), (CSDP2), and (ERSDP), and their optimal objective values are $$-76.3176,~-45.1273,~\text{and}~-25.4763,$$ respectively. Hence, (CSDP2) is tighter (CSDP) and (ERSDP) is further tighter than (CSDP2).
%

\section*{Appendix C: Derivation of the Dual Problem of (CSDP2)}
The canonical form of (CSDP2) is
\begin{equation*}\label{canonial}\begin{array}{cl}
\displaystyle \min_{y,\,\hat X} ~&~ \hat C \bullet \hat X  \\[2pt]
\mbox{s.t.} 
            ~&~ E_{i}  \bullet \hat X =1,~i=1,2,\ldots,n+1,\\[3pt]
            ~&~\textrm{Re}(a^{\dag}_j \hat X_{i+1,1}) + y_{i,j} =\cos\left(\frac{\pi}{M}\right),~i=1,2,\ldots,n,~j=0,1,\ldots,M-1,\\[6pt]
            ~&~y \geq 0,~\hat X\succeq 0,
\end{array}\end{equation*}
where the variables $y\in\mathbb{R}^{M\times n}$ and $\hat X \in \mathbb{C}^{(n+1) \times (n+1)},$
$\hat C = \left(
        \begin{array}{cc}
          0 & ~c^\dagger \\
          c & ~Q \\
        \end{array}
      \right),$ and $E_{i}\in \mathbb{R}^{(n+1)\times(n+1)}$ is the all-zero matrix except its $i$-th diagonal entry being $1.$ 
   Let $\tau$ be the Lagrange multiplier corresponding to the constraint $E_1 \bullet \hat X =1,$ $-\lambda_i$
   be the Lagrange multiplier corresponding to the constraint $E_{i+1}  \bullet \hat X =1,$ and $-\mu_{i,j}$ be the Lagrange multiplier corresponding to the constraint $\textrm{Re}\left(a^{\dag}_j \hat X_{i+1,1}\right) + y_{i,j} =\cos\left(\frac{\pi}{M}\right).$ Notice that both the nonnegative orthant cone and the semidefinite cone are self-dual. Thus, \eqref{dualproblem} is the dual problem of (CSDP2). 

\section*{Appendix D: Proof of Theorem \ref{thmprob}}
{{
To prove Theorem \ref{thmprob}, we first state the following two lemmas.

\begin{lemma}
Under the same assumptions as in Theorem \ref{thmprob}, the smallest singular value $s_{\min}$ of the matrix $\frac{1}{\sqrt{2m}} H$ satisfies
\begin{equation}\label{probability1}\mathbb{P}\left(s_{\min}\leq 1-\sqrt{n/m}-t\right)\leq e^{-mt^2}.\end{equation}
\end{lemma}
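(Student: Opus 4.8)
The plan is to split \eqref{probability1} into a concentration statement around the mean together with a lower bound on the mean. I would first regard $f(g):=s_{\min}\bigl(\tfrac{1}{\sqrt{2m}}H\bigr)$ as a function of the standard Gaussian vector $g\in\mathbb{R}^{2mn}$ that lists the entries of $\textrm{Re}(H)$ and $\textrm{Im}(H)$, which are i.i.d.\ standard real Gaussians under the assumptions of Theorem \ref{thmprob}. Since singular values are $1$-Lipschitz in the operator norm and $\|H-H'\|_{\textrm{op}}\le\|H-H'\|_F=\|g-g'\|_2$ (here $\|\cdot\|_F$ is the Frobenius norm), the map $f$ is Lipschitz with constant $1/\sqrt{2m}$. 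The Gaussian concentration inequality then yields
\begin{equation*}
\mathbb{P}\bigl(f\le\mathbb{E}[f]-t\bigr)\le\exp\!\left(-\frac{t^2}{2\,(1/(2m))}\right)=e^{-mt^2},
\end{equation*}
which already produces the exponent appearing in \eqref{probability1}.

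It then suffices to prove the mean bound $\mathbb{E}[f]\ge 1-\rho=1-\sqrt{n/m}$, since substituting it for $\mathbb{E}[f]$ in the display above gives exactly \eqref{probability1}. Setting $W:=\tfrac{1}{\sqrt2}H$, the matrix $W$ is a standard complex Gaussian matrix (each entry has unit second moment) and $\tfrac{1}{\sqrt{2m}}H=\tfrac{1}{\sqrt m}W$, so the required bound is equivalent to $\mathbb{E}[s_{\min}(W)]\ge\sqrt m-\sqrt n$, the complex counterpart of the classical Davidson--Szarek/Gordon estimate $\mathbb{E}[s_{\min}(G)]\ge\sqrt N-\sqrt n$ for a real $N\times n$ Gaussian matrix $G$. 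I would obtain it from Gordon's Gaussian comparison inequality applied to the centered process $X_{u,w}=\textrm{Re}(w^{\dag}Wu)$ indexed by the complex unit spheres, for which $\min_{\|u\|_2=1}\max_{\|w\|_2=1}X_{u,w}=s_{\min}(W)$. Morally the comparison replaces $s_{\min}(W)$ by $\|\mathbf g\|_2-\|\mathbf h\|_2$ for independent standard complex Gaussian vectors $\mathbf g\in\mathbb{C}^m$ and $\mathbf h\in\mathbb{C}^n$, and a computation with the exact $\chi$-moments shows $\mathbb{E}\|\mathbf g\|_2-\mathbb{E}\|\mathbf h\|_2\ge\sqrt m-\sqrt n$, which is the desired estimate.

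The main obstacle is precisely this expectation bound in the complex setting. The tempting shortcut of passing to the $2m\times 2n$ real representation of $H$ (with $\textrm{Re}(H)$ and $\textrm{Im}(H)$ as its blocks), whose smallest singular value coincides with $s_{\min}(H)$, does \emph{not} reduce the problem to the real i.i.d.\ case: that matrix is not i.i.d.\ Gaussian, and a direct computation shows that the covariance of $X_{u,w}$ carries an extra cross term beyond the product form, so a Slepian/Gordon comparison against a genuine i.i.d.\ real matrix fails term by term. One therefore has to run the comparison intrinsically on the complex process and control this cross term, or else simply invoke the known sharp non-asymptotic estimate for the extreme singular values of complex Gaussian matrices. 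By contrast, the concentration step is routine once the Lipschitz constant $1/\sqrt{2m}$ is identified, and assembling the two ingredients gives \eqref{probability1}.
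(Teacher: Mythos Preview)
The paper does not actually prove this lemma; it merely states ``For the proof of the above lemma, we refer the reader to \cite{Davidson} and \cite[Chapter~9]{Foucart}.'' Your two-step outline---Gaussian Lipschitz concentration of $s_{\min}$ (with constant $1/\sqrt{2m}$) for the tail $e^{-mt^{2}}$, followed by the Gordon/Davidson--Szarek lower bound $\mathbb{E}[s_{\min}]\ge 1-\sqrt{n/m}$---is exactly the argument carried out in those references, so your proposal is aligned with the cited proof; your diagnosis that the complex case requires either running Gordon's comparison intrinsically on the complex process or invoking the known complex-Ginibre estimate (rather than passing to the non-i.i.d.\ $2m\times 2n$ real representation) is also correct.
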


For the proof of the above lemma, we refer the reader to \cite{Davidson} and \cite[Chapter 9]{Foucart}. 
Substituting $\rho=\sqrt{n/m}$ and $t=(1-\rho)/2$ into \eqref{probability1}, we get
$$\mathbb{P}\left(s_{\min}\leq (1-\rho)/2\right)\leq e^{- \frac{m(1-\rho)^2}{4}},$$
which, together with the fact $\lambda_{\min}(H^\dag H) = 2m s^2_{\min},$ further implies
\begin{equation}\label{Prob0}
\mathbb{P}\left(\lambda_{\min}(H^\dag H) \leq m(1-\rho)^2/2\right)\leq  e^{- \frac{m(1-\rho)^2}{4}}.
\end{equation}



The next result is an extension of \cite[Proposition 3.6]{So2010}.
\begin{lemma}
Under the same assumptions as in Theorem \ref{thmprob}, there holds
\begin{equation}\label{Hvsigmaprob}
\mathbb{P}\left(\|H^{\dag}v\|_{\infty} > 2\sqrt{2} m \sigma \right)\leq 2\sqrt{2/\pi} \cdot n\cdot e^{-m/2}+8e^{-m/8}.
\end{equation}
\end{lemma}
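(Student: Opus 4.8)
The plan is to fix a coordinate $i$ and condition on the noise vector $v$, thereby reducing the bound to a one-dimensional Gaussian tail estimate together with a single concentration bound for $\|v\|_2^2$. A union bound over the $n$ coordinates then produces the factor $n$ in the first term, while the (single) deviation event for $\|v\|_2^2$ produces the second term $8e^{-m/8}$. This is the natural way to extend \cite[Proposition 3.6]{So2010}, which handles only $\textrm{Re}(H^{\dag}v)$, to the full complex magnitude $\|H^{\dag}v\|_{\infty}$.

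First I would observe that, writing $h_i$ for the $i$-th column of $H$, we have $[H^{\dag}v]_i=h_i^{\dag}v=\sum_{k=1}^m \overline{H_{ki}}\,v_k$. Since the entries $H_{ki}$ are circularly symmetric complex Gaussian (their real and imaginary parts are i.i.d.\ standard Gaussian) and the columns of $H$ are independent, conditioning on $v$ makes $[H^{\dag}v]_i$ a circularly symmetric complex Gaussian with $\mathbb{E}\big[\,|[H^{\dag}v]_i|^2 \mid v\,\big]=2\|v\|_2^2$; in particular $\textrm{Re}\big([H^{\dag}v]_i\big)$ and $\textrm{Im}\big([H^{\dag}v]_i\big)$ are independent real Gaussians of variance $\|v\|_2^2$. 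Using the elementary inclusion $\{|w|>t\}\subseteq\{|\textrm{Re}(w)|>t/\sqrt2\}\cup\{|\textrm{Im}(w)|>t/\sqrt2\}$ together with the Gaussian tail bound $\mathbb{P}(|Z|>u)\le\sqrt{2/\pi}\,u^{-1}e^{-u^2/2}$ for a standard normal $Z$, I would bound, on the event $\|v\|_2^2\le 4m\sigma^2$, the conditional probability $\mathbb{P}\big(|[H^{\dag}v]_i|>2\sqrt2\,m\sigma \mid v\big)$ by $2\sqrt{2/\pi}\,e^{-m/2}$, where the residual prefactor $\sqrt2\,\|v\|_2/(2\sqrt2\,m\sigma)\le m^{-1/2}\le 1$ is simply discarded. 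A union bound over $i=1,\dots,n$ then yields $\mathbb{P}\big(\|H^{\dag}v\|_\infty>2\sqrt2\,m\sigma,\ \|v\|_2^2\le 4m\sigma^2\big)\le 2\sqrt{2/\pi}\,n\,e^{-m/2}$, which is exactly the first term.

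It remains to control the complementary event $\{\|v\|_2^2>4m\sigma^2\}$. Since $\|v\|_2^2/\sigma^2$ is a sum of $2m$ independent squared standard Gaussians, it follows a $\chi^2_{2m}$ distribution with mean $2m$, and a standard chi-squared (Chernoff/sub-exponential) tail estimate gives $\mathbb{P}(\|v\|_2^2>4m\sigma^2)=\mathbb{P}(\chi^2_{2m}>4m)\le 8e^{-m/8}$. Splitting $\mathbb{P}(\|H^{\dag}v\|_\infty>2\sqrt2\,m\sigma)$ across $\{\|v\|_2^2\le 4m\sigma^2\}$ and its complement and summing the two bounds produces the claimed inequality.

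The main obstacle I anticipate is constant bookkeeping rather than any conceptual difficulty: one must choose the threshold for $\|v\|_2^2$ (here $4m\sigma^2$) so that the two exponents emerge compatibly as $e^{-m/2}$ and $e^{-m/8}$, and one must carry the $\sqrt{2/\pi}$ factor and the factor $2$ coming from splitting a complex magnitude into its real and imaginary parts exactly through the estimate, checking that the auxiliary prefactor $\sqrt2\,\|v\|_2/t$ is indeed $\le1$ on the good event so that it may be dropped. The conditioning-on-$v$ step itself is the crucial idea, since it decouples the $n$-dependence (absorbed by the coordinate-wise union bound) from the single deviation of $\|v\|_2$.
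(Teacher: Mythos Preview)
Your proposal is correct and follows essentially the same approach as the paper. The paper passes to the real reformulation $\hat H^{\T}\hat v=[\textrm{Re}(H^{\dag}v);\textrm{Im}(H^{\dag}v)]$ and invokes \cite[Proposition~3.6]{So2010} as a black box once for the real part and once for the imaginary part, then takes a union bound; your argument simply unfolds that proposition inline---conditioning on $v$, applying the Gaussian tail bound coordinatewise, and handling the deviation of $\|v\|_2^2$ via a single $\chi^2_{2m}$ tail bound. The mathematical content (real/imaginary split, conditioning on $v$, Gaussian tail plus chi-squared tail) is identical. A minor byproduct of your more direct route is that the $\|v\|_2^2$ deviation is a \emph{single} event, so you could in fact obtain $4e^{-m/8}$ rather than the paper's $8e^{-m/8}$ (which arises from double-counting that event in the two black-box applications); you have harmlessly weakened your bound to match the stated constant.
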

\begin{proof}
Let
\begin{align*}
\hat{H}=\left[\hat H_1, \hat H_2
        \right],~\hat H_1=\left[
   \begin{array}{c}
     \textrm{Re}(H) \\
     \textrm{Im}(H) \\
   \end{array}
 \right],~\hat H_2=\left[
            \begin{array}{c}
              -\textrm{Im}(H) \\
              \textrm{Re}(H) \\
            \end{array}
          \right],
\text{and}~\hat{v}=\begin{bmatrix}
\textrm{Re}(v) \\[3pt]
\textrm{Im}(v)\\
\end{bmatrix}.
\end{align*}
It follows from \cite[Proposition 3.6]{So2010} that
\begin{equation*}
\mathbb{P}\left( \|\hat{H}_1^{\T} \hat{v}\|_{\infty} \geq 2 m \sigma\right) \leq \sqrt{2/\pi} \cdot n\cdot e^{-m/2}+4e^{-m/8}
\end{equation*}
and
\begin{equation*}
\mathbb{P}\left( \|\hat{H}_2^{\T} \hat{v}\|_{\infty} \geq 2 m \sigma\right) \leq \sqrt{2/\pi} \cdot n\cdot e^{-m/2}+4e^{-m/8},
\end{equation*}
and thus
\begin{equation*}
\mathbb{P}\left( \|\hat{H}^{\T} \hat{v}\|_{\infty} \geq 2 m \sigma\right)  \leq 2 \sqrt{2/\pi} \cdot n\cdot e^{-m/2}+8e^{-m/8}.
\end{equation*}
Since $ \|\hat{H}^{\T} \hat{v}\|_{\infty}\geq  \frac{\sqrt{2}}{2}  \|H^{\dag}v\|_{\infty},$ we immediately get the desired result \eqref{Hvsigmaprob}.~ %
\end{proof}

%
Now we are ready to prove Theorem \ref{thmprob}. The proof is similar to the one in \cite[Theorem 3.3]{So2010}.
First, combining \eqref{sigmabound} and \eqref{Hvsigmaprob}, we have
\begin{equation}\label{Prob2}
\mathbb{P}\left\{ \|H^{\dag}v\|_{\infty} \geq \frac{ m(1-\rho)^2 \sin(\pi/M)}{2} \right\} \leq 2\sqrt{2/\pi} \cdot n\cdot e^{-m/2}+8e^{-m/8}.
\end{equation}
Then, it follows from \eqref{Prob0} and \eqref{Prob2} that the probability that the event $$\left\{ \lambda_{\min}(H^\dag H) \geq \frac{m(1-\rho)^2}{2}\right\} \bigcap \left\{ \|H^{\dag}v\|_{\infty} \leq \frac{ m(1-\rho)^2 \sin(\pi/M)}{2}\right\}$$ happens is at least $1-e^{- \frac{m(1-\rho)^2}{4}}- 2\sqrt{2/\pi} \cdot n\cdot e^{-m/2}-8e^{-m/8}.$
Finally, it is simple to verify that the probability that the above event happens is greater than or equal to the one that condition \eqref{condition} is satisfied. The proof of Theorem \ref{thmprob} is completed.

%

%

}}




\begin{thebibliography}{}


\bibitem{uniqueness}
{\sc F. Alizadeh, J.-P. A. Haeberly, and M. L. Overton}, {\em Complementarity
and nondegeneracy in semidefinite programming}, Math. Program., 77 (1997), pp.~111--128.


\bibitem{Bandeira2016}
{\sc A. S. Bandeira, N. Boumal, and A. Singer}, {\em Tightness of the maximum likelihood
  semidefinite relaxation for angular synchronization}, Math. Program., 163 (2017), pp.~145--167.

\bibitem{Bandeira2014}
{\sc A. S. Bandeira, Y. Khoo, and A. Singer}, {\em Open problem: Tightness of maximum likelihood semidefinite relaxations},
J. Mach. Learn. Res.: Workshop and Conference Proceedings, 35 (2014), pp.~1265--1267.


\bibitem{Beck}
{\sc A. Beck and M. Teboulle}, {\em Global optimality conditions for quadratic optimization problems
with binary constraints}, SIAM J. Optim., 11 (2000), pp.~179--188.

\bibitem{Boumal}
{\sc N. Boumal}, {\em Nonconvex phase synchronization}, SIAM J. Optim., 26 (2016), pp.~2355--2377.

{{
\bibitem{Damen}
{\sc O. Damen, A. Chkeif, and J.-C. Belfiore}, {\em Lattice code decoder for space-time codes}, IEEE Commun. Lett., 4 (2000), pp.~161--163.

\bibitem{Davidson}
{\sc K. R. Davidson and S. J. Szarek}, {\em Local operator theory, random matrices
and Banach spaces}, in Handbook of the geometry of Banach spaces, Vol. I,  2001, Amsterdam: North-Holland, pp.~317--366.
}}




{{
\bibitem{Fan}
{\sc X. Fan, J. Song, D. P. Palomar, and O. C. Au}, {\em Universal binary semidefinite relaxation for ML signal
detection}, IEEE Trans. Commun., 61 (2013), pp.~4565--4576.


\bibitem{Foucart}
{\sc F. S. Foucart and H. Rauhut}, {\em A Mathematical Introduction to Compressive Sensing}, Birkh\"{a}user, Basel, Switzerland, 2013.
}}

\bibitem{Goemans1995}
{\sc M. Goemans and D. Williamson}, {\em Improved approximation algorithms for maximum cut and satisfiability problems using semidefinite programming},
Journal of the ACM, 42 (1995), pp.~1115--1145.

\bibitem{Goemans2004}
{\sc M. Goemans and D. Williamson}, {\em Approximation algorithms for Max-3-Cut and other
problems via complex semidefinite programming}, J. Comput. Syst. Sci., 68 (2004), pp.~442--470.

{{
\bibitem{Honig}
{\sc M. Honig, U. Madhow, and S. Verdu}, {\em Blind adaptive multiuser detection}, IEEE Trans. Inf. Theory, 41 (1995), pp.~944--960.
}}

\bibitem{jacobsson2017quantized}
{\sc S. Jacobsson, G. Durisi, M. Goldstein, and C. Studer}, {\em Quantized precoding for massive MU-MIMO}, IEEE Trans. Commun., 65 (2017), pp.~4670-4684.


\bibitem{Jalden2006}
{\sc J. Jald\'{e}n}, {\em Detection for multiple input multiple output channels}, Ph.D. Thesis, School of
Electrical Engineering, KTH, Stockholm, Sweden, 2006.

\bibitem{Jalden}
{\sc J. Jald\'{e}n, C. Martin, and B. Ottersten}, {\em Semidefinite programming for
  detection in linear systems -- Optimality conditions and space-time decoding}, in Proceedings of the IEEE International Conference on Acoustics, Speech, and Signal Processing (ICASSP'03), Hong Kong, 2003, pp.~9--12.

{{
\bibitem{Jalden2008}
{\sc J. Jald\'{e}n and B. Ottersten}, {\em The diversity order of the semidefinite relaxation detector}, IEEE Trans. Inf. Theory, 54 (2008), pp.~1406--1422.
}}


\bibitem{Kisialiou2005}
{{
{\sc M. Kisialiou and Z.-Q. Luo}, {\em Performance analysis of quasi-maximum-likelihood detector based on semi-definite programming}, in
Proceedings of the IEEE International Conference on Acoustics, Speech, and Signal Processing (ICASSP'05), Philadelphia, 2005, pp.~433--436.
}}



\bibitem{Kisialiou}
{\sc M. Kisialiou and Z.-Q. Luo}, {\em Probabilistic analysis of semidefinite relaxation for binary quadratic minimization}, SIAM J. Optim., 20 (2010), pp.~1906--1922.

{{
\bibitem{Kohno}
{\sc R. Kohno, H. Imai, and M. Hatori}, {\em Cancellation techniques of co-channel interference in asynchronous spread spectrum multiple access
systems}, Trans. Elect. Commun., 66 (1983), pp.~416--423.
}}


{{
\bibitem{Lemarechal}
{\sc C. Lemarechal and F. Oustry}, {\em SDP relaxations in
combinatorial optimization from a Lagrangian point of view}, in Advances in Convex Analysis and Global Optimization, N. Hadijsavvas
and P.M. Paradalos, eds., Kluwer, Norwell, MA, 2001, pp.~119--134.
}}

\bibitem{Liu}
{\sc H. Liu, M.-C. Yue, and A. M.-C. So}, {\em On the estimation performance and convergence rate of the generalized power method for phase synchronization}, SIAM J. Optim., 27 (2017), pp.~2426-2446.

\bibitem{Liu2017}
{\sc H. Liu, M.-C. Yue, and A. M.-C. So}, {\em A discrete first-order method for large-scale MIMO detection with provable guarantees},
in Proceedings of the 18th IEEE Workshop on Signal Processing Advances in Wireless Communications (SPAWC'17), Sapporo, 2017, pp.~669--673.


\bibitem{simo}
{\sc Y.-F. Liu, M. Hong, and Y.-H. Dai}, {\em Max-min fairness linear transceiver design problem for a multi-user SIMO interference channel is
polynomial time solvable}, IEEE Signal Process. Lett., 20 (2013), pp.~27--30.

\bibitem{luliu2017tsp}
{\sc C. Lu and Y.-F. Liu}, {\em An efficient global algorithm for single-group multicast beamforming}, IEEE Trans. Signal Process., 65 (2017), pp.~3761--3774.

\bibitem{Lu2017iccc}
{\sc C. Lu, Y.-F. Liu, and J. Zbou}, {\em An efficient global algorithm for nonconvex complex quadratic problems with applications in wireless communications},
in Proceedings of the 6th IEEE/CIC International Conference on Communications in China (ICCC'17), Qingdao, 2017, pp.~1--5.

\bibitem{Luo2010}
{\sc Z.-Q. Luo, W.-K. Ma, A. M.-C. So, Y. Ye, and S. Zhang}, {\em Semidefinite relaxation of
  quadratic optimization problems}, IEEE Signal Process. Mag., 27 (2010), pp.~20--34.


\bibitem{Ma2004}
{\sc W.-K. Ma, P.-C. Ching, and Z. Ding}, {\em Semidefinite relaxation based multiuser
detection for M-ary PSK multiuser systems}, IEEE Trans. Signal Process., 52 (2004), pp.~2862--2872.

\bibitem{Maio2009}
{\sc A. D. Maio, S. D. Nicola, Y. Huang, Z.-Q. Luo, and S. Zhang}, {\em Design of phase codes for radar
performance optimization with a similarity constraint}, IEEE Trans. Signal Process., 57 (2009), pp.~610--621.

{{
\bibitem{Mobasher}
{\sc A. Mobasher, M. Taherzadeh, R. Sotirov, and A. K. Khandani}, {\em A near-maximum-likelihood decoding algorithm
for MIMO systems based on semi-definite programming}, IEEE Trans. Inf. Theory, 53 (2007), pp.~3869--3886.

\bibitem{Murugan}
{\sc A. D. Murugan, H. E. Gamal, M. O. Damen, and G. Caire}, {\em A unified framework for tree search decoding: Rediscovering the sequential
decoder}, IEEE Trans. Inf. Theory, 52 (2006), pp.~933--953.
}}


\bibitem{Wenqiang17}
{\sc W. Pu, Y.-F. Liu, J. Yan, S. Zhou, H. Liu, and Z.-Q. Luo}, {\em Optimal estimation of sensor biases for asynchronous multi-sensor data fusion}, Math. Program.
170 (2018), pp.~357--386.

%


{{
\bibitem{Schneider}
{\sc K. S. Schneider}, {\em Optimum detection of code division multiplexed
signals}, IEEE Trans. Aerosp. Elect. Syst., AES-15 (1979), pp.~181--185.
}}

{{
\bibitem{Shor}
{\sc N. Z. Shor and A. S. Davydov}, {\em Method of obtaining estimates in quadratic extremal problems with Boolean variables}, Cybern. Syst. Anal., 21 (1985), pp.~207--210.
}}

\bibitem{Singer}
{\sc A. Singer}, {\em Angular synchronization by eigenvectors and semidefinite programming}, Appl. Comput. Harmon. Anal., 30 (2011), pp.~20--36.

\bibitem{So2010}
{\sc A. M.-C. So}, {\em Probabilistic analysis of the semidefinite relaxation detector
  in digital communications}, in Proceedings of the Twenty-First Annual ACM-SIAM Symposium on Discrete Algorithms (SODA'10), Austin, 2010, pp.~698--711.

\bibitem{So2008}
{\sc A. M.-C. So, J. Zhang, and Y. Ye }, {\em On approximating complex quadratic optimization
problems via semidefinite programs}, Math. Program., 110 (2007), pp.~93--110.



\bibitem{Sohrabi2018}
{\sc F. Sohrabi, Y.-F. Liu, and W. Yu}, {\em One-bit precoding and constellation range design for massive {MIMO} with {QAM} signaling}, IEEE J. Sel. Topics Signal Process., 12 (2018), pp.~557--570.

\bibitem{Soltanalian}
{\sc M. Soltanalian and P. Stoica}, {\em Designing unimodular codes via quadratic optimization}, IEEE Trans. Signal Process., 62 (2014), pp.~1221--1234.

\bibitem{Sun}
{\sc J. Sun, Q. Qu, and J. Wright}, {\em When are nonconvex problems not scary}, preprint, arXiv:1510.06096, 2016.

\bibitem{Tan}
{\sc P. H. Tan and L. K. Rasmussen}, {\em The application of semidefinite programming for
  detection in CDMA}, IEEE J. Sel. Areas Commun., 19 (2001), pp.~1442--1449.

{{
\bibitem{Varanasi}
{\sc M. K. Varanasi}, {\em Decision feedback multiuser detection: A systematic
approach}, IEEE Trans. Inf. Theory, 45 (1999), pp.~219--240.
}}

\bibitem{Verdu1989}
{\sc S. Verd\'{u}}, {\em Computational complexity of optimum multiuser detection}, Algorithmica, 4 (1989), pp.~303--312.

\bibitem{Verdu}
{\sc S. Verd\'{u}}, {\em Multiuser Detection}, Cambridge Univ. Press, New York, 1998.

\bibitem{Waldspurger}
{\sc I. Waldspurger, A. Aspremont, and S. Mallat}, {\em Phase recovery, MaxCut and complex
  semidefinite programming}, Math. Program., 149 (2015), pp.~47--81.



{{
\bibitem{Xie}
{\sc Z. Xie, R. T. Short, and C. K. Rushforth}, {\em A family of suboptimum
detectors for coherent multi-user communications}, IEEE J. Sel. Areas Commun., 8 (1990), pp.~683--690.
}}

\bibitem{Yang}
{\sc S. Yang and L. Hanzo}, {\em Fifty years of MIMO detection: The road to large-scale MIMOs}, IEEE Commun. Surveys Tuts., 17 (2015), pp.~1941--1988.

\bibitem{Zhang}
{\sc S. Zhang and Y. Huang}, {\em Complex quadratic optimization and semidefinite programming}, SIAM J. Optim., 16 (2006), pp.~871--890.

\end{thebibliography}



\end{document}